\newtheorem{theo}{Theorem}[section]
\newtheorem{lemma}[theo]{Lemma}
\newtheorem{coro}[theo]{Corollary}
\newtheorem{prop}[theo]{Proposition}
\newtheorem{assump}[theo]{Assumption}
\theoremstyle{definition}
\newtheorem{defi}[theo]{Definition}
\theoremstyle{remark}
\newtheorem{rem}[theo]{Remark}
\def\<#1,#2>{\langle #1,#2\rangle}
\renewcommand{\geq}{\geqslant}
\renewcommand{\leq}{\leqslant}
\newcommand{\W}{\mathcal{W}}
\newcommand{\M}{\mathcal{M}}
\newcommand{\R}{\mathbb{R}}
\newcommand{\pP}{\mathcal{P}}
\newcommand{\K}{\mathcal{K}}
\newcommand{\D}{\mathcal{D}}
\newcommand{\sym}{\operatorname{S}}
\newcommand{\conv}{\operatorname{conv}}
\newcommand{\firstdef}[1]{{\em #1}}%first occurrence of a term is emphasized,
\title{Contraction of Riccati flows applied to the convergence analysis of a max-plus curse of dimensionality free method}
\author{Zheng QU}
\address{CMAP and INRIA\\
         \'Ecole Polytechnique\\
          91128 Palaiseau C\'edex, France}
\email[]{zheng.qu@polytechnique.edu}
\thanks{A short manuscript announcing the present results (without the proofs) has been submitted to the European control conference 2013.}
\keywords{Dynamic programming, maxplus basis numerical method, contraction mapping in Thompson's metric, indefinite Riccati flow,
switching linear quadratic control}
\subjclass[2010]{Primary 49M25,49L20,47H09; Secondary 90C59, 93C30,49N10.}
\begin{document}
\maketitle

\begin{abstract}
Max-plus based methods have been recently explored for solution of first-order Hamilton-Jacobi-Bellman equations by several authors. In particular, McEneaney's curse-of-dimensionality free method applies to the equations where the Hamiltonian takes the form of a (pointwise) maximum of linear/quadratic forms.
 In previous works of McEneaney and Kluberg, the approximation error of the method was shown to be $O(1/(N\tau))$+$O(\sqrt{\tau})$ where $\tau$ is the time discretization step and $N$ is the number of iterations. Here we use a recently established contraction result for the indefinite Riccati flow in Thompson's metric to show that under different technical assumptions, still covering an important class of problems, the error is only of order  $O(e^{-\alpha N\tau})+O(\tau)$ for some $\alpha>0$. This also allows us to obtain improved estimates of the execution time and to tune the precision of the pruning procedure, which in practice is a 
critical element of the method.
\end{abstract}

\section {Introduction}

\subsection{Max-plus methods in optimal control}
Dynamic Programming (DP) is a general approach to the solution of optimal control problems. 
In the case of deterministic optimal control, this approach leads to solving a first-order, nonlinear partial differential equation, the Hamilton-Jacobi-Bellman equation(HJB PDE). 
Various methods have been proposed for solving the HJB PDE. We cite among all the finite difference schemes, 
the method of the vanishing viscosity of Crandall and Lions~\cite{crandall-lions}, the discrete dynamic programming method or
semi-Lagrangian method developed by Falcone~\cite{falcone} and others~\cite{capuzzodolcetta,
% capuzzodolcetta-ishii%\cite{falc},
falcone-ferretti,%falcone-giorgi,
carlini-falcone-ferretti}, the high order ENO  schemes introduced by Osher, Sethian and Shu~\cite{OsherSethian88,OsherShu91},
 the discontinuous Galerkin method by Hu and Shu~\cite{HuShu99}, the ordered upwind methods for convex static Hamilton-Jacobi equations by Sethian and Vladimirsky~\cite{SethianVladimirsky03} which is an extension of the fast marching method for the Eikonal equations~\cite{Sethian99}, and
the antidiffusive schemes for advection of Bokanowski and Zidani~\cite{zidani-bokanowski}. 
  These methods generally require the generation of a grid on the state space. This is known to suffer from the so called curse-of-dimensionality since the computational growth in the state-space dimension is exponential.

Recently a new class of methods has been developed after
the work of Fleming and McEneaney~\cite{a5}, see in particular the works of McEneaney~\cite{curseofdim}, of Akian, Gaubert and Lakhoua~\cite{a6}, of McEneaney, Deshpande and Gaubert~\cite{a7}, of Dower and McEneaney~\cite{DowerMcE} and of James \textit{et al.}~\cite{eneaneyphys}. These methods are referred to as \textit{max-plus basis methods} since they all rely on max-plus algebra.
Their common idea is to approximate the value function by a supremum
of finitely many ``basis functions'' and to propagate the supremum forward in time by exploiting the max-plus
linearity of the Lax-Oleinik semigroup. Recall that the Lax-Oleinik semigroup $(S_t)_{t\geq0}$ associated to a Hamiltonian $H(\cdot,\cdot):\R^n\times \R^n\rightarrow \R$ is 
the evolution semigroup of the following HJB PDE
\begin{equation}\label{guih}
 -\frac{\partial v}{\partial t}+H(x,\nabla v)=0,\qquad (x,t)\in \R^n\times (0,T],
\end{equation}
with initial condition
\begin{equation}
 v(x,0)=\phi(x),\qquad  x\in \R^n .
\end{equation}
Thus, $S_t$ maps the initial function $\phi(\cdot)$ to the function $v(\cdot,t)$. Among several max-plus basis methods which have been proposed, the curse-of-dimensionality-free method introduced by McEneaney~\cite{curseofdim} is of special interest. This method applies to the special class of HJB PDE where the Hamiltonian $H$ is given or approximated as a pointwise maximum of computationally simpler Hamiltonians:
\begin{align}\label{a-H-eq}
H(x,\nabla V)=\max_{m\in \M} \{H^ m(x,\nabla V)\}
\end{align}
with $\M=\{1,2,\cdots,M\}$. In particular, McEneaney studied the case where each Hamiltonian $H^ m$ is a linear/quadratic form, originating from a linear quadratic optimal control problem:
$$
H^ m(x,p)=(A^ mx)'p+\frac{1}{2}x'D^ mx+\frac{1}{2}p'\Sigma^ m p,
$$
where $(A^m, D^m,\Sigma^m)$ are matrices meeting certain conditions. Such Hamiltonian $H$ corresponds to a linear quadratic switching optimal control problem (Section~\ref{sec-probclass}) where the control switches between several linear quadratic systems. We are interested in finding the value function $V$ of the corresponding infinite horizon switching optimal control problem. The method consists in two successive approximations (Section~\ref{sec-method}).
First we approximate the infinite horizon problem by a finite horizon problem. 
Then we approximate the value function of the finite horizon switching optimal control problem by choosing an optimal strategy which does not switch on small intervals. 

We denote by $(S_t)_{t\geq 0}$ and $(S_t^m)_{t\geq 0}$ for all $m\in \M$ respectively the semigroup corresponding to $H$ and $H^m$ for all $m\in \M$. Let $V^0$ be a given initial function and $T>0$ be the finite horizon.
The first approximation uses $S_T[V^0]$ to approximate $V$ and introduces the finite-horizon truncation error:
  $$\epsilon_0(x,T,V^0):=V(x)-S_T[V^0](x).$$ 
Let $\tau>0$ be a small time step and $N>0$ such that $T=N\tau$. Denote by $\tilde S_{\tau}$ the semigroup of the optimal control problem where the control does not switch on the interval $[0,\tau]$.
The second approximation approximates $S_T[V^0]$ by $\{\tilde S_\tau\}^N[V^0]$. The error at point $x$  of this time discretization approximation is denoted by:
$$
\epsilon(x,\tau, N, V^0):=S_T[V^0](x)-\{\tilde S_\tau\}^N[V^0](x).
$$
 The total error at a point $x$ is then simply $\epsilon_0(x,T,V^0)+\epsilon(x,\tau, N, V^0)$. We shall see that $\tilde S_\tau=\sup_{m\in \M} S^m_\tau$. Therefore $\tilde S_\tau$ applied to a quadratic function corresponds to solving $|\M|$ Riccati equations, requiring $O(|\M|n^3)$ arithmetic operations.
The total number of computational cost is $O(|\M|^Nn^ 3)$, with a cubic growth in the state dimension $n$. In this sense it is considered as a curse of dimensionality free method. However, we see that the computational cost is bounded by a number exponential in the number of iterations, which is 
referred to as the curse of complexity. 
In practice, a pruning procedure denoted by $\pP_\tau$ removing at each iteration a number of functions less useful than others is needed in order to reduce the curse of complexity. We denote the error at point $x$ of the time dicretization approximation incorporating the pruning procedure by:
$$
\epsilon^\mathcal{P_{\tau}}(x,\tau, N,V^0)=S_T[V^0](x)-\{\pP_\tau\circ\tilde S_\tau\}^N[V^0](x).
$$

\subsection{Main contributions}
In this paper, we analyze the growth rate of $\epsilon_0(x,T,V^0)$ as $T$ tends to infinity and the growth rate of $\epsilon^{\pP_\tau}(x,\tau, N, V^0)$ as $\tau$ tends to $0$, incorporating a pruning procedure $\pP_\tau$ of error $O(\tau^r)$ with $r>1$. The error $\epsilon(x,\tau,N,V^0)$ in the absence of pruning is obtained when $r=+\infty$.

We show that under technical assumptions (Assumption~\ref{assum1} and~\ref{assum2}), 
$$
\epsilon_0(x,T,V^0)=O(e^{-\alpha T}),\quad \mathrm{as~~}T\rightarrow +\infty
$$
 uniformly for all $x\in \R^n$ and all initial quadratic functions $V^0(x)=\frac{1}{2}x'Px$ where $P$ is a matrix in a certain compact (Theorem~\ref{theo-error1}). We also show that given a pruning procedure generating an error $O(\tau^ r)$ with $r>1$,
$$
\epsilon^\mathcal{P_{\tau}}(x,\tau, N,V^0)=O(\tau^{\min\{1,r-1\}}), \quad \mathrm{as~~}\tau\rightarrow 0
$$
uniformly for all $x\in \R^n$, $N\in \mathbb N$ and $V^0$ as above
(Theorem~\ref{theo-error2}). 
As a direct corollary, we have
$$
\epsilon(x,\tau, N,V^0) =O(\tau ), \quad \mathrm{as~~}\tau\rightarrow 0
$$
uniformly for all $x\in \R^n$, $N\in \mathbb N$ and $V^0$ as above.

\subsection{Comparison with earlier estimates}
McEneaney and Kluberg showed in~\cite[Thm 7.1]{MR2599910} that under Assumption~\ref{assum1}, for a given $V^0$,
\begin{align}\label{a-epsilon0}
 \epsilon_0(x,T,V^0)=O(\frac{1}{T}), \quad \mathrm{as~~}T\rightarrow +\infty
\end{align}
uniformly for all $x\in \R^n$. They also showed \cite[Thm 6.1]{MR2599910} that if in addition to Assumption~\ref{assum1}, the matrices
$\Sigma_m$ are all identical for $m\in \M$, then for a given $V^0$, 
\begin{align}\label{a-epsilonN}
\epsilon(x,\tau,N,V^0)=O(\sqrt{\tau}),\quad \mathrm{as~~}\tau\rightarrow 0
\end{align}
uniformly for all $x\in \R^n$ and $N\in \mathbb N$.
 %\begin{theo}[Theorem 6.1 and 7.1 of~\cite{MR2599910}]\label{theo-McE}Under Assumption~\ref{assum1}(see below), for all $V^0\in G_\delta$ (see~\eqref{a-Gdelta}), there exists $L_\delta>0$ such that $$ \epsilon_0(x,T,V^0) \leq \frac{L_\delta}{T} (1+|x|^2)$$ for all $T>0$ and all $x\in \R^n$.If we assume in addition that $\Sigma_m$ are all identical for $m\in \M$, then for a given $V^0$ there is $K_{\delta}>0$ such that for sufficiently small $\tau>0$,$$0\leq \epsilon(x,\tau,N,V^0)\leq K_{\delta}(1+|x|^2)(\tau+\sqrt \tau).$$for all $N>0$ and $x\in \R^ d$. \end{theo}
Their estimates imply that to get a sufficiently small approximation error $\epsilon$ we can use a horizon $T= O(1/\epsilon)$ and a discretization step $\tau =  O(\epsilon^2)$. Thus asymptotically the computational cost is:
$$
O(|\M|^{O(1/\epsilon^3)} n^3), \quad \mathrm{as~~} \epsilon \rightarrow 0.
$$
The same reasoning applied to our estimates shows a considerably smaller asymptotic growth rate of the computational cost (Corollary~\ref{coro-numberofiter}):
$$
O(|\M|^{O(-\log (\epsilon)/\epsilon)} n^3), \quad \mathrm{as~~} \epsilon \rightarrow 0
$$
McEneaney and Kluberg~\cite{MR2599910} gave a technically difficult proof of the estimates~\eqref{a-epsilon0} and~\eqref{a-epsilonN}, assuming that all the $\Sigma_m$'s are the same. They conjectured that the latter assumption can at least be released for a subclass of problems. This is supported by our results, showing that for the subclass of problems satisfying Assumption~\ref{assum2}, this assumption can be omitted. To this end, we use a totally different approach. Our main idea is to use the Thompson's metric to measure the error. It is well-known that the Thompson's metric defined on the space of positive-definite matrices is a Finsler metric and that the standard Riccati flow is
strictly contracting in Thompson's metric~(see~\cite{liveraniw, LawsonLimMR2338433}). However, we shall see that the Riccati equations appeared in the problem are indefinite and so we can not apply directly the contraction results.
It has been shown recently in~\cite{SGQThompsonrate} that the indefinite Riccati flows has a strict local contraction property in Thompson's metric under some technical assumptions. This local contraction result on the indefinite Riccati flow constitutes an essential part of our proofs. We shall also need an extension of the Thompson's metric to the space of supremum of quadratic functions.

Our approach derives a tighter estimate of $\epsilon_0(x,T,V^0)$ and $\epsilon(x,\tau,N,V^0)$ compared to previous results as well as an estimate of $\epsilon^\mathcal{P_{\tau}}(x,\tau, N,V^0)$ incorporating the pruning procedure. This new result justifies the use of pruning procedure of error $O(\tau^ 2)$ without increasing the asymptotic total approximation error order.

The paper is organized as follows. In Section~\ref{sec-maxpluspb}, we recall the switching linear quadratic control problem and the max-plus approximation method. In Section~\ref{sec-Thomp}, we recall the contraction results on the indefinite Riccati flow as well as the extension of the Thompson's metric to the space of supremum of quadratic functions. In Sections~\ref{sec-main} and~\ref{sec-theo2}, we present the estimates of the two approximation errors and part of the proofs. In Section~\ref{sec-tecprof}, we
show the proofs of some technical lemmas.
 Finally in Section~\ref{sec-dis}, we give some remarks and some numerical illustrations of the theoretical estimates.

\section{Problem statement}\label{sec-maxpluspb}
 We recall briefly the problem class and present some basic concepts and necessary assumptions. The reader can find more details in~\cite{curseofdim}.

\subsection{Problem class}\label{sec-probclass}
Let $\M=\{1,\cdots,M\}$ be a finite index set.  
We are interested in finding the value function of the following switching optimal control problem:
$$
V(x)= \sup_{w\in \W}\sup_{\mu\in \D_{\infty}} \sup_{T>0}\int_0^ T\frac{1}{2}(\xi_t^ {\mu_t})'D^{\mu_t} \xi_t^ {\mu_t}-\frac{\gamma^ 2}{2}|w_t|^ 2 dt
$$ 
where $$\D_{\infty}:=\{\mu:[0,\infty)\rightarrow \M,\mu\mathrm{~measurable}\},$$
$$\W:= \{w:[0,\infty)\rightarrow \R^ k:\int_{0}^ T|w_t|^ 2dt < \infty, \forall T<\infty\},$$ and $\xi$ is subject to:
\begin{align}\label{eqdfgg}
\dot \xi=A^ {\mu_t}\xi+\sigma^ {\mu_t}w_t, \enspace \xi_0=x.
\end{align}
In the sequel we denote $\Sigma^ m:=\frac{\gamma^ 2}{2}\sigma^ m (\sigma^ m)'$.
As in~\cite{curseofdim}, we make the following assumptions throughout the paper to guarantee the existence of $V$.
\begin{assump}\label{assum1}
~

\begin{itemize}
 \item There exists $c_A>0$ such that:
$$  
x'A^ mx\leq -c_A|x|^ 2,\quad \forall x\in \R^ n, m\in \M
$$
\item There exists $c_\sigma >0$ such that:
$$
|\sigma^ m |\leq c_\sigma,\quad \forall m\in \M
$$
\item All $D^ m$ are positive definite, symmetric, and there is $c_D$ such that:
$$
x'D^ mx\leq c_D|x|^ 2,\quad \forall x\in \R^ n, m\in \M,
$$
and
$$
c_A^ 2> \frac{c_Dc_\sigma^ 2}{\gamma^ 2}
$$
\end{itemize} 
\end{assump}

\subsection{Evolution semigroup}
Define the semigroup:
$$
S_T[V^0](x)=\sup_{w\in \W}\sup_{\mu\in \D_{T}} J(x,T;V^0;w,\mu)$$
where
$$
J(x,T;V^0;w,\mu):=\int_0^ T\frac{1}{2}\xi_t'D^{\mu_t} \xi_t-\frac{\gamma^ 2}{2} |w_t|^ 2 dt+V^0(\xi_T),
$$ 
$$\D_T=\{\mu: [0,T)\rightarrow \M, \mu\mathrm{~measurable}\},$$ and $\xi:[0,T]\rightarrow \R$ is subject to~\eqref{eqdfgg}.
For each $m\in \M$, define the semigroup $\{S^ m_t\}_{t\geq 0}$:
$$
S_T^m[V^0](x):= \sup_{w\in \W}J(x,T;V^0;w,\mu^m),
$$
where
$$
\mu^m_t=m,\quad t\in[0,T].
$$
Note that for each $t>0$, $S_t$ is \firstdef{max-plus linear} in the sense that for two functions $V_1,V_2:\R^ n\rightarrow \R$, 
we have:
\begin{align}
 &S_t[V_1+V_2](x)=S_t[V_1](x)+S_t[V_2](x),\quad \forall x\in \R^n\\\label{a-mplin}
&S_t[\sup\{V_1,V_2\}](x)=\sup\{S_t[V_1](x),S_t[V_2](x)\},\quad \forall x\in \R^ n
\end{align}

\subsection{Steady HJB equation}

For any $\delta\in (0,\gamma)$, define
\begin{align}\label{a-Gdelta}
G_{\delta}:=\{V\mathrm{~semiconvex~},V(x)\leq \frac{c_A(\gamma-\delta)^2}{c_\sigma^2}|x|^2, \enspace \forall x \}.
\end{align}
Then the value function $V$ is the unique viscosity solution of the following corresponding HJB PDE in the class $G_{\delta}$ for sufficiently small $\delta$~\cite{curseofdim}:
\begin{align}\label{a-define-H}
0=-H(x,\nabla V)=-\max_{m\in \M} H^ m(x,\nabla V).
\end{align}
where 
$$
H^ m(x,p)=(A^ mx)'p+\frac{1}{2}x'D^ mx+\frac{1}{2}p'\Sigma^ m p.
$$
%Denote by $\{S_t\}_{t\geq 0}$ the evolution semigroup corresponding to $H$ and $\{S_t^m\}_{t\geq 0}$ the semigroup corresponding to $H_m$ for all $m\in \M$.
It was shown in~\cite{curseofdim} that for $\delta$ sufficiently small and $V^0\in G_\delta$, 
\begin{align}\label{a-Tinfty}
\lim_{T\rightarrow \infty } S_T[V^0]=V
\end{align}
uniformly on compact sets. 

\subsection{Max-plus based approximation}\label{sec-method}
We review the basic steps of the algorithm proposed in~\cite{curseofdim} to approximate the value function $V$. Firstly using~\eqref{a-Tinfty} we are allowed to approximate $V$ by $S_T[V^0]$ for some sufficiently large $T$. We then choose a time-discretization step $\tau>0$ and a number of iterations $N$ such that $T=N\tau$ to approximate $S_T$ by
$$
S_T= \{S_\tau\}^N\simeq \{\tilde S_\tau\}^N
$$
where $\tilde S_\tau=\displaystyle\sup_{m\in \M} S_\tau^m$. 
At the end of $N$ iterations, we get our approximated value function represented by:
$$
V\simeq \sup_{i_1,\cdots,i_N}S_\tau^{i_N}\cdots S_\tau^{i_1}[V^0].
$$
If we choose $V^0(x)=\frac{1}{2}x'Px$ as a quadratic function, then the approximated value function will be the maximum of $|\M|^N$ quadratic functions. This so-called curse-of-complexity can be reduced by performing a pruning process at each iteration of the algorithm to remove some quadratic functions, see~\cite{a7}. 

\subsection{Approximation errors}
As pointed out in~\cite{MR2599910}, the approximation error comes from two parts. The first error source is
$$
\epsilon_0(x,T,V^0):=V(x)-S_{T}[V^0](x).
$$
This is due to the approximation of the infinite-horizon problem by a finite-horizon problem. The second source of error is
$$
\epsilon(x,\tau, N, V^0):= S_{N\tau}[V^0](x)-\{\tilde S_\tau\}^ N[V^0](x),
$$
due to the approximation of the semigroup by a time-discretization. In Section~\ref{sec-method} we mentioned that in practice
 a pruning procedure is needed so as to reduce 
the number of quadratic functions. More precisely, let $\{f_i\}_{i\in I}$ be a finite set of quadratic functions and 
\begin{align}\label{a-fsup}
f=\sup_{i\in I} f_i.
\end{align}
A pruning operation $\pP$ applied to $f$ produces an approximation of $f$ by selecting a subset $J\subset I$:
$$
f\simeq \pP[f]=\sup_{j\in J}f_j.
$$ 
If we take into account the pruning procedure, then the second error source should be written as:
\begin{align}\label{a-err-pru}
\epsilon^\mathcal{P_{\tau}}(x,\tau, N,V^0)= S_{N\tau}[V^0](x)-\{\pP_{\tau}\circ \tilde S_\tau\}^ N[V^0](x),
\end{align}
where $\pP_{\tau}$ represents a given pruning rule. We mark the subscript $\tau$ since it is expected that the pruning procedure be adapted with the time step $\tau$. In particular, we say that $\pP_\tau$ is a pruning procedure generating an error $O(\tau^{r})$ if there is $L>0$ such that for all function $f$ of the form~\eqref{a-fsup},
\begin{align}\label{a-pruningerror}
\pP_\tau[f]\leq f\leq (1+L\tau^{r}) \pP_\tau[f].
\end{align}
The special case without pruning procedure can be recovered by considering $r=+\infty$.

%An estimation of $\epsilon_0(x,T,V^0)$ and $\epsilon(x,\tau, N,V^0)$ is given in~\cite{MR2599910}, see Theorem~\ref{theo-McE}. In this paper we give an estimation of $\epsilon^\mathcal{P_{\tau}}(x,\tau, N,V^0)$, as well as an improved estimation of $\epsilon_0(x,T,V^0)$ under additional constraints using the contraction property of Riccati flows.

\section{Contraction properties of the indefinite Riccati flow}\label{sec-Thomp}
Before showing the main results, we present here an essential ingredient of our proof: the contraction properties of the indefinite Riccati flow.

\subsection{Loewner order and Thompson's part metric}
 We recall some basic notions and terminology. We refer the readers to~\cite{nussbaum88} for more
background.

We consider the space of $n$-dimensional symmetric matrices $\sym_n$ equipped with the operator norm $\|\cdot\|$. 
The space of positive semi-definite (resp. positive definite) matrices is denoted by $\sym_n^ +$ (resp. $\hat\sym_n^ +$ ).
The \firstdef{Loewner order}  "$\leq$" on $\sym_n$ is defined by:
$$
A\leq B \Leftrightarrow B-A\in \sym_n^ +.
$$
For $A\leq B$ we define the order intervals:
$$
\begin{array}{l}
[A,B]:=\{P\in \sym_n|A\leq P\leq B\}.
\end{array}
$$
For $P_1,P_2\in \hat \sym_n^+$, following~\cite{nussbaum88}, we define
$$
\begin{array}{l}
M(P_1/P_2):=\inf\{t>0:P_1\leq tP_2\}\\
\end{array}
$$
\begin{defi} The \firstdef{Thompson part metric} between two elements $P_1$ and $P_2$ of $\hat \sym_n^ +$ is
$$
d_T(P_1,P_2):=\log(\max \{M(P_1/P_2),M(P_2/P_1)\}).
$$
\end{defi}
%Here is a simple remark useful in the sequel:\begin{rem}For $P\in \hat \sym_n^ +$ and $\lambda \geq 1$, $d_T(P,\lambda P)=\log \lambda.$\end{rem}
%It is known that the metric space $(\hat \sym_n^+, d_T(\cdot,\cdot))$ is complete. 
\subsection{Contraction rate of the indefinite Riccati flow}
For each $m\in \M$, define the function $\Phi_m$: $\sym_n \rightarrow \sym_n$:
\begin{align}\label{a-phim}
\Phi_m(P)=(A^m)'P+PA^ m+P\Sigma^ mP+D^ m.
\end{align}
Associated to each $\Phi_m$ we define the flow map by:
$$M^m_t[P_0]=P(t),\quad t\in [0,T)$$
where $P(t):[0,T)\rightarrow \sym_n$ is a maximal solution of the following initial value problem on $\sym_n$:
\begin{align}\label{a-Riccati}
\dot P=\Phi_m(P), \quad P(0)=P_0.
\end{align}
When $V^0(x)=\frac{1}{2}x'P_0 x$ with $P_0\in \sym_n$, a classical result~\cite{YongZhoubook99} states that 
\begin{align}\label{a-S-M}
S_t^m[V^0](x)=\frac{1}{2}x'M^m_t[P_0]x, \quad t\in [0,T).
\end{align}
The standard Riccati equation refers to a vector field of the form~\eqref{a-phim} with $-\Sigma^ m$ and $D^ m$ positive semi-definite.
Here we are concerned with the \firstdef{indefinite} Riccati equation since the matrix coefficient $\Sigma^m$ is positive semi-definite.

The contraction property and the contraction rate calculus of the standard Riccati flow in Thompson's metric have been given in~\cite{liveraniw} and~\cite{LawsonLimMR2338433}. However, their approach depends on the algebraic property of the associated symplectic operator, which fails in the indefinite case. In~\cite{SGQThompsonrate}, the authors give a general explicit formula for the local contraction rate of a flow, in Thompson's metric, from which it follows that under additional constraints on the matrix coefficients, the Riccati flow is still a local contraction in the indefinite case. Below is the additional assumption needed to apply this new contraction result:
\begin{assump}\label{assum2}
 There is $m_D>0$ such that 
$$
x'D^ mx\geq m_D|x|^2,\quad \forall x\in \R^n,m\in \M
$$
and $$
\frac{c_\sigma^2}{\gamma^2}m_D>(c_A-\sqrt{c_A^2-c_Dc_\sigma^2/\gamma^2})^2.
$$
\end{assump}
In the sequel we denote
$$
\lambda_1=\frac{\gamma^2(c_A-\sqrt{c_A^2-c_D c_\sigma^ 2/\gamma^ 2})}{c_\sigma^2} ,\enspace\lambda_2:=\sqrt{m_D\gamma^ 2/c_\sigma^2}.
$$
\begin{rem}\label{rem-inv}
Under Assumption~\ref{assum2}, we can choose $\epsilon>0$ sufficiently small so that 
\begin{align}\label{a-posi}
M_{t_0}^m[0]\geq \epsilon I, \enspace \mathrm{~for~some~}t_0>0, m\in \M.
\end{align}
Since $\Phi_m(0)=D^m \geq m_D I$ for all $m\in \M$, we can let $\epsilon$ be sufficiently small such that $\Phi_m(\epsilon I)\geq 0$ for all $m\in \M$. Besides, for any $\lambda \in [\lambda_1,\lambda_2)$, we have $\Phi_m(\lambda I)\leq 0$ for all $m\in \M$. Then it follows from a standard result on the Riccati equation that: 
\begin{align}\label{a-invariantset}
M_t^m[P_0]\in [\epsilon I,\lambda I], \enspace \forall m\in \M, t\geq 0, P_0 \in[\epsilon I,\lambda I].
\end{align}
\end{rem}
The main ingredient to make our proofs is the following theorem:
\begin{theo}[Corollary 4.6 in~\cite{SGQThompsonrate}]\label{theo-riccaticontraction}
Under Assumptions~\ref{assum1} and~\ref{assum2}, for any 
$\lambda \in [\lambda_1,\lambda_2)$, there is $\alpha>0$ such that for all $P_1,P_2\in (0,\lambda I]$,
$$
d_T(M_t^m[P_1],M_t^m[P_2])\leq e^{-\alpha t}d_T(P_1,P_2),\enspace \forall t\geq 0, m\in \M.
$$
\end{theo}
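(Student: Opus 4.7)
The plan is to apply the general local contraction rate formula for ODE flows in the Thompson part metric, established as Corollary 4.6 of~\cite{SGQThompsonrate}. Since the statement of Theorem~\ref{theo-riccaticontraction} is exactly that corollary specialized to our Riccati vector field $\Phi_m$, the proof reduces to verifying the hypotheses of that abstract result under Assumptions~\ref{assum1} and~\ref{assum2}, and to extracting a contraction rate $\alpha>0$ that is uniform in $m\in\M$ and in the base point $P$.

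First, I would reduce the problem to a compact positively invariant order interval. By Remark~\ref{rem-inv}, for any fixed $\lambda\in[\lambda_1,\lambda_2)$ one can choose $\epsilon>0$ with $\Phi_m(\epsilon I)\geq 0$ and $\Phi_m(\lambda I)\leq 0$ uniformly in $m\in\M$, so the interval $[\epsilon I,\lambda I]$ is positively invariant under every $M_t^m$. For a general initial condition $P_0\in(0,\lambda I]$, order-preservation of the Riccati flow together with $\Phi_m(0)=D^m\geq m_D I$ (Assumption~\ref{assum2}) forces $M_t^m[P_0]$ to enter $[\epsilon I,\lambda I]$ after a uniform time $t_0$; a bounded-distortion argument on $[0,t_0]$ shows that it is enough to establish a uniform contraction rate on the invariant interval itself.

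Next, I would invoke the Finsler-geometric characterization from~\cite{SGQThompsonrate}. The linearization of the Riccati flow at $P$ is the Lyapunov-like operator $Q\mapsto(A^m+\Sigma^m P)'Q+Q(A^m+\Sigma^m P)$, and the general formula of~\cite{SGQThompsonrate} expresses the infinitesimal contraction rate in Thompson's part metric as an explicit spectral quantity built from this linearization and from the sign of $\Phi_m(P)$ relative to $P$. Assumption~\ref{assum1} controls the accretivity of $-A^m$ and the norm of $\Sigma^m$, while the coercivity $x'D^m x\geq m_D|x|^2$ together with the quantitative bound $(c_\sigma^2/\gamma^2)m_D>(c_A-\sqrt{c_A^2-c_Dc_\sigma^2/\gamma^2})^2$ from Assumption~\ref{assum2} is precisely what guarantees $\lambda_1<\lambda_2$ and forces the resulting rate to be bounded below by some $\alpha>0$ on the compact set $[\epsilon I,\lambda I]\times\M$. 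Integrating this uniform lower bound along each trajectory produces the claimed global estimate $d_T(M_t^m[P_1],M_t^m[P_2])\leq e^{-\alpha t}d_T(P_1,P_2)$.

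The main obstacle is the indefiniteness of $\Sigma^m$. In the standard Riccati case ($-\Sigma^m$ positive semi-definite) the contraction in Thompson's metric follows from the symplectic structure of the associated Hamiltonian matrix, as exploited in~\cite{liveraniw,LawsonLimMR2338433}, and holds on the whole cone $\hat\sym_n^+$. Here that symplectic trick breaks down, so contraction is only available \emph{locally}, on the invariant sub-interval identified above, and the verification that the Finsler rate is strictly negative on this sub-interval is where the strengthened quantitative hypothesis of Assumption~\ref{assum2}, beyond the mere existence hypothesis of Assumption~\ref{assum1}, plays its essential role.
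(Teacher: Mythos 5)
This statement is not proved in the paper at all: it is imported verbatim as Corollary~4.6 of the cited reference~\cite{SGQThompsonrate}, and the paper simply invokes it as an external black box. There is therefore no internal proof to compare your attempt against; what you have written is an outline of what you expect the \emph{cited} paper to do, and it should be judged on those terms.

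Read as such, your outline captures the right ingredients (order-preservation, the invariant interval $[\epsilon I,\lambda I]$ furnished by Remark~\ref{rem-inv}, the Finsler characterization of the contraction rate from~\cite{SGQThompsonrate}, and the role of Assumption~\ref{assum2} in making the rate strictly negative), but it contains one genuine gap. You propose to first establish a uniform contraction rate on the compact invariant interval $[\epsilon I,\lambda I]$ and then handle arbitrary initial data $P_1,P_2\in(0,\lambda I]$ by ``a bounded-distortion argument on $[0,t_0]$''. That reduction cannot yield the clean bound $d_T(M^m_t[P_1],M^m_t[P_2])\leq e^{-\alpha t}d_T(P_1,P_2)$ valid for \emph{all} $t\geq0$: at best a bounded-distortion estimate on the transient gives $d_T(M^m_t[P_1],M^m_t[P_2])\leq C\,e^{-\alpha t}d_T(P_1,P_2)$ with some prefactor $C>1$, and for small $t$ this does not imply the claimed inequality. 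To obtain the statement as written, the infinitesimal contraction rate of~\cite{SGQThompsonrate} must be shown to be bounded below by $\alpha>0$ uniformly on the \emph{whole} half-open set $(0,\lambda I]$ (a non-compact set), not merely on $[\epsilon I,\lambda I]$. This is in fact plausible because near $0$ the vector field is dominated by $D^m\geq m_D I$, which makes the rate formula more, not less, favorable there; but your sketch as it stands replaces that verification by a transient argument that would weaken the conclusion. If you intend to reproduce the argument of~\cite{SGQThompsonrate}, you should check the rate formula directly on $(0,\lambda I]$ rather than reducing to the compact sub-interval.
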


\subsection{Extension of the contraction result to the space of functions}
Now we extend the definition of Thompson's metric to the space of non-negative functions.
For two functions $f,g:\R^ n\rightarrow \R$, we consider the standard partial order "$\leq$" by:
$$
f\leq g \Leftrightarrow f(x)\leq g(x),\enspace \forall x\in \R^ n,
$$
which coincides with the Loewner order on the set of quadratic forms.
Similarly, for  $f, g:\R^n\rightarrow \R_+$ we define
$$
M(f/g):=\inf\{t>0:f\leq t g\}
$$
We say that $f$ and $g$ are comparable if $M(f/g)$ and $M(g/f)$ are finite. In that case, we can define the \firstdef{"Thompson metric"} between $f, g:\R^n\rightarrow \R_+$ by:
\begin{align}\label{a-thompfunctions}
d_T(f,g)=\log(\max\{M(f/g),M(g/f)\}).
\end{align}
Then the following lemma can be easily proved using the definition:
\begin{lemma}\label{lemma-fig}
 Let $f,g:\R^n\rightarrow \R_+$ be given by pointwise maxima of non-negative functions
$$
f:=\sup_{i\in I} f_i,\enspace g:=\sup_{i\in I} g_i
$$
Then
\begin{align}\label{a-fg}
 d_T(f,g)\leq \sup_{i\in I} d_T(f_i,g_i).
\end{align}
\end{lemma}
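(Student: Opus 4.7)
The plan is to reduce the inequality to two one-sided comparisons between the quantities $M(\cdot/\cdot)$ and then pass through the logarithm. Concretely, it suffices to establish
$$
M(f/g)\leq \sup_{i\in I}M(f_i/g_i)\quad\text{and}\quad M(g/f)\leq \sup_{i\in I}M(g_i/f_i),
$$
because combining these bounds with the elementary identity $\max\{\sup_i a_i,\sup_i b_i\}=\sup_i\max\{a_i,b_i\}$ (applied with $a_i=M(f_i/g_i)$ and $b_i=M(g_i/f_i)$) and the monotonicity of the logarithm immediately yields~\eqref{a-fg}.

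For the first comparison, set $K:=\sup_{i\in I}M(f_i/g_i)$, which may be assumed finite (otherwise the bound is vacuous). The key preliminary observation is that the infimum defining $M(f_i/g_i)$ is actually attained: if $f_i\leq t_n g_i$ pointwise for a sequence $t_n\downarrow M(f_i/g_i)$, then passing to the limit at each point produces $f_i\leq M(f_i/g_i)\,g_i$. Consequently $f_i\leq K g_i$ for every $i\in I$, and taking the pointwise supremum over $i$ yields $f\leq K g$, which is exactly $M(f/g)\leq K$. The symmetric bound follows by exchanging the roles of $f$ and $g$.

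I do not foresee any real obstacle here: the statement is a direct unpacking of definitions, and the only point that deserves a small amount of care is the attainment of the infimum in $M(\cdot/\cdot)$, which is just a pointwise limit argument using non-negativity of the functions involved. The lemma is preparatory in nature, and its interest lies in the way it will later be used to lift Theorem~\ref{theo-riccaticontraction}, a contraction statement for matrix Riccati flows, to the space of suprema of quadratic functions produced by the max-plus algorithm.
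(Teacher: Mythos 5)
Your proof is correct, and it is exactly the elementary unpacking of the definitions that the paper has in mind; the paper simply states that the lemma ``can be easily proved using the definition'' and leaves the argument to the reader, so there is no alternative route to compare against. One tiny remark: the step ``$f_i\leq K g_i$ for all $i$ implies $f\leq K g$'' is clean because for each fixed $x$ you have $f_i(x)\leq K g_i(x)\leq K g(x)$, and then taking the supremum over $i$ on the left gives $f(x)\leq K g(x)$; you may want to phrase it that way to make clear you are not silently commuting two suprema. The attainment-of-infimum observation is correct and is indeed the only point requiring a word of justification.
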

%\begin{lemma}\label{lemma-limit}Let $f:\R^n\rightarrow \R_+$ be the pointwise limit of a set of positive functions:$$f:=\lim_{t\rightarrow +\infty} f_t.$$Then for any function $g:\R^ n\rightarrow \R_+$ comparable to $f$ we have:$$d_T(f, g)\leq \limsup_{t\rightarrow +\infty} d_T(f_t,g).$$\end{lemma}

%We have a corollary of Theorem~\ref{theo-riccaticontraction}:
The following result is a consequence of the order-preserving character of the Riccati flow and of the contraction property in Theorem~\ref{theo-riccaticontraction}.
\begin{lemma}\label{coro-contractioncoro}
 Under Assumptions~\ref{assum1} and~\ref{assum2}, let
$\lambda \in [\lambda_1,\lambda_2)$ and $\epsilon>0$ such that~\eqref{a-invariantset} holds. Then there is $\alpha>0$ such that for any two functions $V_1$ and $V_2$ of the form:
$$
V_1(x)=\sup_{j\in J}\frac{1}{2}x'P_jx,\qquad V_2(x)=\frac{1}{2}x'Qx,\enspace 
$$
where $Q,P_j\in [\epsilon I,\lambda I]$ for all $j\in J$, we have
$$
d_T(S_{t/N}^{i_N}\cdots S^{i_1}_{t/N}[V_1],S_{t/N}^{i_N}\cdots S^{i_1}_{t/N}[V_2])\leq e^ {-\alpha t}\log(\frac{\lambda}{\epsilon})
$$
for all $t\geq 0$, $N\in \mathbb N$ and $(i_1, \cdots,i_N)\in \M^ N$.
\end{lemma}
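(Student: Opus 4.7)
\medskip

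The plan is to reduce the distance between the two semigroup iterates to a comparison of Riccati flow trajectories on matrices, and then apply Theorem~\ref{theo-riccaticontraction} iteratively.

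\textbf{Step 1: Rewrite the iterates as suprema of quadratic forms.} Using \eqref{a-S-M} together with the max-plus linearity \eqref{a-mplin} (distributivity of each $S_\tau^m$ over pointwise suprema), with $\tau:=t/N$ and the composition $F:=M_\tau^{i_N}\circ\cdots\circ M_\tau^{i_1}$, I obtain
\[
S_\tau^{i_N}\cdots S_\tau^{i_1}[V_1](x)=\sup_{j\in J}\tfrac{1}{2}x'F[P_j]x,
\qquad
S_\tau^{i_N}\cdots S_\tau^{i_1}[V_2](x)=\tfrac{1}{2}x'F[Q]x.
\]
Since the Thompson metric on quadratic forms $\frac{1}{2}x'Px,\frac{1}{2}x'Q x$ (as defined in~\eqref{a-thompfunctions}) reduces to the matrix Thompson metric $d_T(P,Q)$, Lemma~\ref{lemma-fig} applied with $f_j=\tfrac12 x'F[P_j]x$ and the single function $g=\tfrac12 x'F[Q]x$ (constant in $j$) gives
\[
d_T(S_\tau^{i_N}\cdots S_\tau^{i_1}[V_1],S_\tau^{i_N}\cdots S_\tau^{i_1}[V_2])
\leq \sup_{j\in J} d_T(F[P_j],F[Q]).
\]

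\textbf{Step 2: Iterate the local Riccati contraction.} The key point is to ensure that every intermediate matrix stays in the region where Theorem~\ref{theo-riccaticontraction} applies. Since the hypotheses put $P_j,Q\in[\epsilon I,\lambda I]$ and Remark~\ref{rem-inv} asserts that $[\epsilon I,\lambda I]$ is invariant under every flow $M_t^m$, all intermediate iterates $M_\tau^{i_k}\circ\cdots\circ M_\tau^{i_1}[P_j]$ and $M_\tau^{i_k}\circ\cdots\circ M_\tau^{i_1}[Q]$ lie in $[\epsilon I,\lambda I]\subset(0,\lambda I]$. Hence Theorem~\ref{theo-riccaticontraction} yields, for some fixed $\alpha>0$ depending only on $\lambda$,
\[
d_T\bigl(M_\tau^{i_k}[P'],M_\tau^{i_k}[Q']\bigr)\leq e^{-\alpha\tau}d_T(P',Q')
\]
whenever $P',Q'\in(0,\lambda I]$. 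Applying this $N$ times in sequence (a routine induction on $k$) and using $N\tau=t$,
\[
d_T(F[P_j],F[Q])\leq e^{-\alpha N\tau}d_T(P_j,Q)=e^{-\alpha t}d_T(P_j,Q).
\]

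\textbf{Step 3: Bound the initial Thompson distance.} Because $P_j,Q\in[\epsilon I,\lambda I]$, we have $P_j\leq\lambda I=(\lambda/\epsilon)\,\epsilon I\leq (\lambda/\epsilon)Q$, so $M(P_j/Q)\leq\lambda/\epsilon$, and symmetrically $M(Q/P_j)\leq\lambda/\epsilon$. Therefore $d_T(P_j,Q)\leq\log(\lambda/\epsilon)$ uniformly in $j$. Combining this with Steps 1 and 2 gives the stated inequality.

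\textbf{Main obstacle.} The only subtlety is the passage from matrices to functions: one needs that $d_T$ on quadratic forms equals $d_T$ on the underlying matrices, and that the pointwise supremum interacts correctly with the Thompson metric so that the index set $J$ drops out. These are handled respectively by the definition of $M(f/g)$ and by Lemma~\ref{lemma-fig}; once this reduction is in place, the result is essentially a telescoping application of Theorem~\ref{theo-riccaticontraction} made legitimate by the invariance property of Remark~\ref{rem-inv}.
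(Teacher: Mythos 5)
Your proof is correct and follows essentially the same route as the paper's: reduce to the matrix Thompson metric via the identification \eqref{a-S-M}, use max-plus linearity \eqref{a-mplin} together with Lemma~\ref{lemma-fig} to drop the index set, iterate Theorem~\ref{theo-riccaticontraction} using the invariance of $[\epsilon I,\lambda I]$ from \eqref{a-invariantset}, and bound the initial distance by $\log(\lambda/\epsilon)$. The paper compresses Steps~1--3 into a single chain of inequalities, but the ingredients and their order of application are the same.
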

\begin{proof} 
For all $P, Q \in [\epsilon I,\lambda I]$, by~\eqref{a-invariantset} and Theorem~\ref{theo-riccaticontraction} we have
$$ d_T(M_{t/N}^{i_N}\cdots M^{i_1}_{t/N}[P],M_{t/N}^{i_N}\cdots M^{i_1}_{t/N}[Q])\leq e^{-\alpha t}d_T(P,Q)$$for all $t\geq 0$, $N\in \mathbb N$ and $(i_1, \cdots,i_N)\in \M^ N$. 
Now by the max-plus linearity of the semigroup~\eqref{a-mplin}, Lemma~\ref{lemma-fig} and the relationship between the semigroup and the flow~\eqref{a-S-M}, we get
$$\begin{array}{l} d_T(S_{t/N}^{i_N}\cdots S^{i_1}_{t/N}[V_1],S_{t/N}^{i_N}\cdots S^{i_1}_{t/N}[V_2])\\ 
\leq\displaystyle\sup_{j\in J} d_T(M_{t/N}^{i_N}\cdots M^{i_1}_{t/N}[P_j],M_{t/N}^{i_N}\cdots M^{i_1}_{t/N}[Q])\\\leq e^{-\alpha t}\displaystyle\sup_{j\in J} d_T(P_j, Q)\leq e^{-\alpha t}\log(\frac{\lambda}{\epsilon}).\end{array}$$\end{proof}

\section{Finite horizon error estimate}\label{sec-main}

We first study the finite horizon truncation error $\epsilon_0(x,T,V^0)$. Below is one of our main results:

\begin{theo}\label{theo-error1}
 Under Assumptions~\ref{assum1} and~\ref{assum2}, let
$\lambda \in [\lambda_1,\lambda_2)$ and $\epsilon>0$ such that~\eqref{a-posi} and~\eqref{a-invariantset} hold. There exist $\alpha>0$ and $K>0$ such that,
$$
\epsilon_0(x,T,V^0)\leq K e^ {-\alpha T}|x|^ 2,\enspace \forall x,
$$
for all $T>0$ and $V^0(x)=\frac{1}{2}x'P_0x$ with $P_0\in [\epsilon I,\lambda I]$.
\end{theo}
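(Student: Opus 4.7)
The plan is to exploit the contraction of the indefinite Riccati flow in Thompson's metric (Lemma~\ref{coro-contractioncoro}) to bound the difference between $V$ and $S_T[V^0]$. The key identity to use is the telescoping
$$V-S_T[V^0]=\lim_{T_0\to\infty}\bigl(S_T\bigl[S_{T_0}[V^0]\bigr]-S_T[V^0]\bigr),$$
which follows from~\eqref{a-Tinfty}, together with the semigroup property $S_T\bigl[S_{T_0}[V^0]\bigr]=S_{T+T_0}[V^0]$. I would then estimate each difference on the right via the contraction lemma applied to the pair $S_{T_0}[V^0]$ and $V^0$.

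A preliminary step is to represent $S_{T_0}[V^0]$, for any $T_0>0$, as a supremum of quadratics whose matrices lie in $[\epsilon I,\lambda I]$. For each fixed $\mu\in\D_{T_0}$ the LQ subproblem gives $S_{T_0}^{\mu}[V^0](x)=\tfrac12 x'P^{\mu}(T_0)x$, where $P^\mu$ solves the time-varying Riccati $\dot P=\Phi_{\mu_t}(P)$ with $P(0)=P_0$. Using $\Phi_m(\epsilon I)\geq 0$ and $\Phi_m(\lambda I)\leq 0$ for every $m\in\M$ (cf.~Remark~\ref{rem-inv}), together with the standard comparison principle for Riccati flows, each such $P^\mu(T_0)$ remains in $[\epsilon I,\lambda I]$. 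By max-plus linearity $S_{T_0}[V^0]=\sup_\mu S_{T_0}^{\mu}[V^0]$, which puts the function in exactly the form required by Lemma~\ref{coro-contractioncoro}.

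Applying Lemma~\ref{coro-contractioncoro} with $V_1=S_{T_0}[V^0]$ and $V_2=V^0$ (both in the invariant set) yields, for every $N\in\mathbb{N}$ and every $(i_1,\dots,i_N)\in\M^N$, the bound $e^{-\alpha T}\log(\lambda/\epsilon)$ on the Thompson distance between $S_{T/N}^{i_N}\cdots S_{T/N}^{i_1}\bigl[S_{T_0}[V^0]\bigr]$ and $S_{T/N}^{i_N}\cdots S_{T/N}^{i_1}[V^0]$. Taking the supremum over index sequences and invoking Lemma~\ref{lemma-fig} lifts the bound to the iterated semigroup $\tilde S_{T/N}^N$. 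Passing $N\to\infty$, using the max-plus approximation $\tilde S_{T/N}^N\to S_T$ and the lower semi-continuity of $d_T$ under pointwise convergence (justified here by the uniform invariant-set bounds that keep all functions comparable), gives
$$d_T\bigl(S_{T+T_0}[V^0],S_T[V^0]\bigr)\leq e^{-\alpha T}\log(\lambda/\epsilon).$$
Letting $T_0\to\infty$ and using~\eqref{a-Tinfty} once more then yields $d_T(V,S_T[V^0])\leq e^{-\alpha T}\log(\lambda/\epsilon)$.

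Finally, I convert this Thompson-metric estimate into the desired quadratic bound. By the invariant-set property~\eqref{a-invariantset}, both $V(x)$ and $S_T[V^0](x)$ lie in $[\tfrac{\epsilon}{2}|x|^2,\tfrac{\lambda}{2}|x|^2]$, and the Thompson-metric definition gives $V(x)\leq e^{\delta_T}S_T[V^0](x)$ with $\delta_T:=e^{-\alpha T}\log(\lambda/\epsilon)$. Hence
$$V(x)-S_T[V^0](x)\leq(e^{\delta_T}-1)\tfrac\lambda2|x|^2,$$
and for $T$ large enough that $\delta_T\leq 1$, the elementary inequality $e^{\delta_T}-1\leq 2\delta_T$ produces the claimed rate; for small $T$ the crude estimate $V(x)\leq\tfrac\lambda2|x|^2$ is absorbed by enlarging the constant $K$. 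The main obstacle, in my view, will be rigorously justifying the two successive passages to the limit (in $N$ and then in $T_0$) inside Thompson's metric, since $d_T$ is only lower semi-continuous and is defined only for comparable functions; the uniform invariant-set bound is precisely what ensures comparability is preserved in the limit.
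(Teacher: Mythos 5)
Your proof is correct and rests on the same core ingredients as the paper (the contraction result via Lemma~\ref{coro-contractioncoro}, the sup-compatibility of the Thompson metric in Lemma~\ref{lemma-fig}, the piecewise-constant representation in Lemma~\ref{lemma-St}, and the invariance of $[\epsilon I,\lambda I]$), but it organizes the argument differently from the paper. The paper first establishes, in a separate corollary, that the fixed point $V$ is itself a pointwise supremum of quadratics $\tfrac12 x'P_jx$ with $P_j\in[\epsilon I,\lambda I]$; it then applies the contraction directly to the pair $(V,V^0)$, exploiting the stationarity $V=S_T[V]$. You instead apply the contraction to the pair $(S_{T_0}[V^0],V^0)$ and pass to the limit $T_0\to\infty$, which forces you to invoke the lower semicontinuity of $d_T$ under pointwise convergence. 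This step is valid (lower semicontinuity holds because $\{t: f\leq tg\}$ is closed under pointwise limits of $f$), but it is an extra limiting argument that the paper avoids entirely by working with $V$ directly. Two smaller remarks: (i) the ``passing $N\to\infty$'' step is unnecessary and slightly misleading, since Lemma~\ref{lemma-St} expresses $S_T$ as a \emph{supremum} over $(N,i_1,\dots,i_N)$, not a limit in $N$, and Lemma~\ref{lemma-fig} handles that sup in one stroke with no continuity argument needed; (ii) your representation of $S_{T_0}[V^0]$ as $\sup_\mu S_{T_0}^\mu[V^0]$ over all measurable $\mu$ requires an invariance result for the \emph{time-varying} Riccati flow on $[\epsilon I,\lambda I]$, whereas the paper deduces the same conclusion from Lemma~\ref{lemma-St} and the autonomous invariance~\eqref{a-invariantset}, keeping the argument entirely within the framework already set up. Neither is a gap, but the paper's organization is tighter; what your route buys is a more self-contained derivation that does not require a separate characterization of $V$ and instead treats $V$ purely as a limit object.
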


The remaining part of the section is devoted to the proof of the above theorem.
We shall need the following technical lemma. The proof is deferred to Section~\ref{sec-pflSt}.
\begin{lemma}[Approximation by piecewise constant controls]\label{lemma-St}
Let $V^0: \R^n\rightarrow \R$ be a given locally Lipschitz function. For any $T>0$
we have
$$
S_T[V^0]=\sup_{N}\sup_{i_1,\cdots i_N}S_{T/N}^{i_N}\cdots S^{i_1}_{T/N}[V^0].
$$
\end{lemma}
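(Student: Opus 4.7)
The direction $\mathrm{RHS}\leq\mathrm{LHS}$ is immediate. A straightforward induction on $N$, using the dynamic programming principle for a fixed control, shows
\[
S^{i_N}_{T/N}\cdots S^{i_1}_{T/N}[V^0](x)\;=\;\sup_{w\in\W} J(x,T;V^0;w,\mu^{(i_1,\ldots,i_N)}),
\]
where $\mu^{(i_1,\ldots,i_N)}\in\D_T$ is the piecewise constant switching signal with value $i_{N-k+1}$ on $[(k-1)T/N,kT/N)$, $k=1,\ldots,N$. Hence the right-hand side of the lemma is the supremum of $J(x,T;V^0;w,\mu)$ restricted to the subclass of pairs $(w,\mu)\in\W\times\D_T$ in which $\mu$ is constant on a uniform mesh, and is therefore bounded by $S_T[V^0](x)$. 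The strategy for the reverse inequality is to show that arbitrary $(w,\mu)\in\W\times\D_T$ can be approximated by such pairs along which $J$ is continuous.

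\emph{Density step.} Fix $\mu\in\D_T$ and write $A_m=\mu^{-1}(m)$ for $m\in\M$; these form a measurable partition of $[0,T)$ since $\M$ is finite. For each $N$, let $I^N_k=[(k-1)T/N,kT/N)$, set $i^N_k\in\arg\max_{m\in\M}|A_m\cap I^N_k|$, and define $\mu^N_t=i^N_k$ on $I^N_k$. Applying Lebesgue's differentiation theorem to the indicator of $A_{\mu_t}$ yields, for a.e.\ $t\in[0,T]$, $|A_{\mu_t}\cap I^N_{k(t)}|/|I^N_{k(t)}|\to 1$ as $N\to\infty$, where $k(t)$ denotes the index with $t\in I^N_{k(t)}$. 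For all large $N$ the majority value on $I^N_{k(t)}$ is therefore $\mu_t$, so $\mu^N_t\to\mu_t$ almost everywhere.

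\emph{Continuity step.} Fix $w\in\W$ and let $\xi,\xi^N$ solve~\eqref{eqdfgg} from $x$ under $(\mu,w)$ and $(\mu^N,w)$ respectively. The error $e^N_t:=\xi^N_t-\xi_t$ satisfies
\[
\dot e^N_t = A^{\mu^N_t}e^N_t + (A^{\mu^N_t}-A^{\mu_t})\xi_t + (\sigma^{\mu^N_t}-\sigma^{\mu_t})w_t,\qquad e^N_0=0.
\]
Uniform boundedness of $\{A^m,\sigma^m\}_{m\in\M}$ bounds $|\xi_t|$ on $[0,T]$. The $A$-forcing term tends to zero in $L^1([0,T])$ by dominated convergence; the $\sigma$-forcing term by Cauchy--Schwarz combined with $\int_0^T|\sigma^{\mu^N_t}-\sigma^{\mu_t}|^2\,dt\to 0$ (again dominated convergence, with bounded integrand converging a.e.). Gronwall's inequality then gives $\sup_{[0,T]}|e^N_t|\to 0$. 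The decomposition
\[
(\xi^N_t)'D^{\mu^N_t}\xi^N_t-\xi_t'D^{\mu_t}\xi_t \;=\; (e^N_t)'D^{\mu^N_t}(\xi^N_t+\xi_t) + \xi_t'(D^{\mu^N_t}-D^{\mu_t})\xi_t,
\]
together with the local Lipschitz continuity of $V^0$ on a compact containing $\{\xi_T\}\cup\{\xi^N_T\}_N$, delivers $J(x,T;V^0;w,\mu^N)\to J(x,T;V^0;w,\mu)$.

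\emph{Conclusion and main obstacle.} Given $\epsilon>0$, pick $(w,\mu)\in\W\times\D_T$ with $J(x,T;V^0;w,\mu)\geq S_T[V^0](x)-\epsilon$; the previous two steps then furnish, for all large $N$, indices $(i_1,\ldots,i_N)\in\M^N$ (obtained by relabelling $(i^N_1,\ldots,i^N_N)$ as in the first paragraph) with
\[
S^{i_N}_{T/N}\cdots S^{i_1}_{T/N}[V^0](x)\;\geq\; J(x,T;V^0;w,\mu^N)\;\geq\; S_T[V^0](x)-2\epsilon.
\]
Letting $\epsilon\to 0$ gives the reverse inequality. The main obstacle is the density step: because the mesh is forced to have uniform step $T/N$ rather than free breakpoints, the approximation is not immediate and Lebesgue's differentiation theorem is essential. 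The continuity step is standard, although the treatment of the $\sigma$-forcing term relies essentially on $w\in L^2$ via Cauchy--Schwarz, pointwise convergence of $\sigma^{\mu^N_t}$ alone being insufficient.
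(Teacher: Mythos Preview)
Your proof is correct and follows a genuinely different route from the paper's.

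The paper equips $\D_T$ with the metric $d(\mu,\nu)=\int_0^T 1_{\mu\neq\nu}\,dt$ and first proves a quantitative continuity lemma: $|J(x,T;V^0;w,\mu)-J(x,T;V^0;w,\tilde\mu)|\leq C(\delta^{1/2}+\delta)$ whenever $d(\mu,\tilde\mu)\leq\delta$, via Gronwall and Cauchy--Schwarz. For the density step it invokes Lusin's theorem to find a compact $K\subset[0,T]$ of nearly full measure on which $\mu$ is uniformly continuous, and then builds $\tilde\mu$ on a sufficiently fine uniform mesh so that $\tilde\mu=\mu$ on $K$, forcing $d(\mu,\tilde\mu)$ small.

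Your approach replaces Lusin by the Lebesgue density theorem: the majority-vote construction $\mu^N$ converges to $\mu$ almost everywhere because, at each Lebesgue point $t$ of $A_{\mu_t}$, the majority on $I^N_{k(t)}$ is eventually $\mu_t$ (the non-centred intervals $I^N_{k(t)}$ shrink nicely to $t$ in one dimension, so the density theorem applies). Your continuity step then proceeds by dominated convergence rather than by an explicit modulus. This is arguably more direct and avoids the auxiliary metric and Lusin's theorem entirely; the price is that you obtain only qualitative convergence $J(w,\mu^N)\to J(w,\mu)$, whereas the paper's argument yields an explicit rate in $d(\mu,\tilde\mu)$. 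Since the lemma only asserts equality and no rate is used downstream, nothing is lost.
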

From now on we make Assumptions~\ref{assum1} and~\ref{assum2}. We also fix 
$\lambda \in [\lambda_1,\lambda_2)$ and $\epsilon>0$ satisfying \eqref{a-posi} and~\eqref{a-invariantset}.

\begin{rem}
 Since the interval $[\epsilon I,  \lambda I]$ is invariant by any operator $\{S_\tau^m\}_{\tau\geq 0,m\in \M}$, it is direct from Lemma~\ref{lemma-St} that 
\begin{align}\label{a-bound}
\frac{\epsilon}{2} |x|^2\leq S_T[V^0](x) \leq \frac{\lambda}{2} |x|^2,\enspace \forall T>0
\end{align} 
for 
all $V^0(x)=\frac{1}{2}x'Px$ with $P\in [\epsilon I,\lambda I]$.
\end{rem}
\begin{coro}\label{coro-V}
  The value function $V$ is a pointwise supremum of quadratic functions 
$$
V(x)=\sup_{j\in J} \frac{1}{2}x'P_j x
$$
where $P_j\in [\epsilon I, \lambda I]$ for all $j\in J$.
\end{coro}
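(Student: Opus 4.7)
The plan is to exhibit $V$ explicitly as the monotone increasing pointwise supremum of $S_T[V^0]$ for a well-chosen quadratic $V^0$, and then unfold each $S_T[V^0]$ via Lemma~\ref{lemma-St} into a sup of quadratics with matrices in $[\epsilon I,\lambda I]$.

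I would choose $V^0(x)=\frac{1}{2} x'(\epsilon I)x$. For $\epsilon$ sufficiently small this lies in $G_\delta$ for some $\delta$ small enough that \eqref{a-Tinfty} applies, giving $S_T[V^0]\to V$ uniformly on compact sets as $T\to\infty$. Next I would show that $T\mapsto S_T[V^0](x)$ is nondecreasing: since $\Phi_m(\epsilon I)\geq 0$ by the choice of $\epsilon$ in Remark~\ref{rem-inv}, comparing the Riccati flow to the stationary sub-solution $\epsilon I$ gives $M^m_t[\epsilon I]\geq \epsilon I$ for every $t\geq0$ and $m$, which via \eqref{a-S-M} implies $S^m_\tau[V^0]\geq V^0$. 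Taking the supremum over $m$ yields $S_\tau[V^0]\geq V^0$, and applying the order-preserving operator $S_T$ to both sides together with the semigroup property gives $S_{T+\tau}[V^0]\geq S_T[V^0]$. Uniform convergence on compacts of a monotone sequence then forces $V(x)=\sup_{T>0}S_T[V^0](x)$ pointwise.

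For each fixed $T>0$, Lemma~\ref{lemma-St} expresses $S_T[V^0](x)$ as the supremum over $N\geq 1$ and $(i_1,\dots,i_N)\in\M^N$ of $S^{i_N}_{T/N}\cdots S^{i_1}_{T/N}[V^0](x)$. Iterating \eqref{a-S-M}, each such term equals $\frac{1}{2} x'Qx$ with $Q=M^{i_N}_{T/N}\cdots M^{i_1}_{T/N}[\epsilon I]$, and the invariance property \eqref{a-invariantset} ensures that $Q\in[\epsilon I,\lambda I]$. Combining this with the identity $V=\sup_{T>0} S_T[V^0]$ and reindexing all the triples $(T,N,(i_1,\dots,i_N))$ by a single set $J$ yields the announced representation $V(x)=\sup_{j\in J}\frac{1}{2} x'P_j x$ with every $P_j\in[\epsilon I,\lambda I]$.

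The one step that is not a direct invocation of earlier results is the monotonicity claim $S_{T+\tau}[V^0]\geq S_T[V^0]$; it is the only place where the specific choice $P_0=\epsilon I$ (so that $\Phi_m(P_0)\geq 0$) matters, and without it one would only get $V$ as a limit, not as a supremum, of the desired quadratics.
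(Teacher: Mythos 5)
Your proof is correct, but it follows a genuinely different route from the paper. The paper starts from the definitional identity $V(x)=\sup_{T>0}S_T[0](x)$ (the value function \emph{is} a supremum over horizons of the integral from the zero terminal cost), then ``bumps'' the initial datum $0$ into the invariant interval $[\epsilon I,\lambda I]$ by precomposing with $S_{t_0}^m$ (using \eqref{a-posi} to get $M_{t_0}^m[0]\geq \epsilon I$) and squeezing: $\sup_T S_T[S_{t_0}^m[0]]\leq V=\sup_T S_T[0]\leq \sup_T S_T[S_{t_0}^m[0]]$. You instead take the specific quadratic initial datum $V^0=\frac{1}{2}x'\epsilon I x$, invoke the convergence result \eqref{a-Tinfty} to get $S_T[V^0]\to V$, and supply the monotonicity $S_{T+\tau}[V^0]\geq S_T[V^0]$ (via $\Phi_m(\epsilon I)\geq 0$) needed to upgrade the limit to a supremum. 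After that both arguments coincide: Lemma~\ref{lemma-St} plus the invariance \eqref{a-invariantset} unfold each $S_T$ into a sup of quadratics with matrices in $[\epsilon I,\lambda I]$. What the paper's route buys is that it avoids \eqref{a-Tinfty} entirely, resting only on the definition of $V$ and on \eqref{a-posi}; what your route buys is a more self-contained, constructive statement ($V$ is the monotone sup of $S_T$ applied to an explicit quadratic). One small caveat in your write-up: $\epsilon$ is already fixed at the start of the section (to satisfy \eqref{a-posi} and \eqref{a-invariantset}), so rather than ``choosing $\epsilon$ sufficiently small'' you should observe that the $\epsilon$ already in force necessarily satisfies $\epsilon<\lambda_1<\gamma^2 c_A/c_\sigma^2$, so $V^0\in G_\delta$ does hold for $\delta$ small enough.
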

\begin{proof}
 By definition, we have:
$$
V(x)=\sup_{T>0} S_T[0](x),\enspace \forall  x.
$$
By~\eqref{a-posi}, there is $t_0>0$ and $m\in \M$ such that 
$$
M_{t_0}^m[0]\geq \epsilon I.
$$
Besides, by the monotonicity of the semigroup,
$$
 S_T[S_{t_0}^m[0]](x)\leq  S_T[S_{t_0}[0]](x),\enspace \forall x, T>0
$$
and
$$
 S_T[0](x)\leq  S_T[S_{t_0}^m[0]](x),\enspace \forall x,T>0.
$$
Since
$$
V(x)=\sup_{T>0} S_T[0](x)=\sup_{T}S_T[S_{t_0}[0]](x),\enspace \forall x,
$$
we get that:
$$
\sup_{T}S_T[S_{t_0}^m[0]](x)\leq V(x)\leq \sup_{T}S_T[S_{t_0}^m[0]](x),\enspace x.
$$
Hence by Lemma~\ref{lemma-St}:
$$
V(x)=\sup_{T} S_T[S_{t_0}^m[0]]=\sup_{T} \sup_{N}\sup_{i_1,\cdots i_N}S_{T/N}^{i_N}\cdots S^{i_1}_{T/N} S_{t_0}^m[0].
$$
Now using the invariance of the interval $[\epsilon I,\lambda I]$ in ~\eqref{a-invariantset}, we know that
$$
M_{T/N}^{i_N}\cdots M^{i_1}_{T/N} M_{t_0}^m[0]\in [\epsilon I,\lambda I],
$$
for all $T>0$, $N\in \mathbb N$ and $i_1,\dots,i_N\in \M$.
Consequently $V$ is a pointwise maximum of quadratic functions $\frac{1}{2}x'P_jx$ with $P_j\in [\epsilon I,\lambda I]$.
\end{proof}

Using the above lemma we show that:
\begin{prop}\label{prop-th1}
 There is $\alpha>0$ such that for all $V^0(x)=\frac{1}{2}x'P_0 x$ with $P_0\in [\epsilon I,\lambda I]$,
$$
d_T(V,S_T[V^0])\leq e^ {-\alpha T}\log(\frac{\lambda}{\epsilon}),\enspace \forall T>0.
$$
\end{prop}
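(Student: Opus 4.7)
The plan is to reduce the claim to a direct application of Lemma~\ref{coro-contractioncoro}, using the dynamic programming principle to rewrite $V$ itself as $S_T[V]$. Starting from $V(x)=\sup_{T'>0}S_{T'}[0](x)$ (as established at the beginning of the proof of Corollary~\ref{coro-V}), the order-preservation of $S_T$ together with its commutation with monotone suprema (an extension of the max-plus linearity~\eqref{a-mplin} to monotone families) yields
\[ S_T[V] \;=\; S_T\bigl[\sup_{T'} S_{T'}[0]\bigr] \;=\; \sup_{T'} S_{T+T'}[0] \;=\; V. \]

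Next I would apply Lemma~\ref{lemma-St} to both $V=S_T[V]$ and to $S_T[V^0]$. This requires local Lipschitz continuity of both $V$ and $V^0$. For $V^0(x)=\tfrac12 x'P_0 x$ this is automatic; for $V$ it follows from Corollary~\ref{coro-V}, since a pointwise supremum of quadratic forms $\tfrac12 x' P_j x$ with $P_j\in[\epsilon I,\lambda I]$ is locally Lipschitz with a constant depending only on $\lambda$ and the radius. Both $V$ and $S_T[V^0]$ are thereby represented as suprema over the common index set $\{(N,i_1,\ldots,i_N):N\in\mathbb{N},(i_k)\in\mathcal{M}^N\}$.

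The third step is to invoke Lemma~\ref{lemma-fig} to push the Thompson distance inside the supremum, reducing the task to bounding
\[ d_T(V,S_T[V^0]) \leq \sup_{N,(i_k)} d_T\bigl(S^{i_N}_{T/N}\cdots S^{i_1}_{T/N}[V],\, S^{i_N}_{T/N}\cdots S^{i_1}_{T/N}[V^0]\bigr). \]
For each fixed $(N,(i_k))$, Lemma~\ref{coro-contractioncoro} applies directly with $V_1=V$ and $V_2=V^0$: Corollary~\ref{coro-V} supplies the representation of $V$ as a sup of quadratics with matrices in $[\epsilon I,\lambda I]$, and the hypothesis puts $P_0$ in the same interval. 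Each term is therefore dominated by $e^{-\alpha T}\log(\lambda/\epsilon)$, and taking the supremum yields the proposition.

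I expect the main obstacle to be the first step, the identity $V=S_T[V]$, since it requires interchanging $S_T$ with an infinite monotone supremum. This should be handled either by a monotone convergence argument (using that $T'\mapsto S_{T'}[0]$ is nondecreasing and that $S_T$ is order-preserving and continuous from below on the relevant cone of semiconvex functions), or by invoking the viscosity-solution characterization of $V$ under Assumption~\ref{assum1} and the semigroup property of such solutions. Once this identity is in hand, the remainder of the proof is a mechanical assembly of results already available in the excerpt.
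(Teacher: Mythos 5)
Your proposal follows the paper's proof of Proposition~\ref{prop-th1} step for step: Corollary~\ref{coro-V} to represent $V$ as a sup of quadratics with matrices in $[\epsilon I,\lambda I]$, the identity $V=S_T[V]$, Lemma~\ref{lemma-St} to expand both $S_T[V]$ and $S_T[V^0]$ as suprema over piecewise-constant switching sequences, Lemma~\ref{lemma-fig} to push $d_T$ inside the sup, and finally Lemma~\ref{coro-contractioncoro} for the uniform bound $e^{-\alpha T}\log(\lambda/\epsilon)$. The only place you deviate is in spelling out a justification for $V=S_T[V]$ (the paper simply asserts it, treating the dynamic programming principle for the infinite-horizon value function as known), and your concern about commuting $S_T$ with an infinite monotone supremum is a legitimate but minor technical point that the paper likewise leaves implicit.
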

\begin{proof}
By Corollary~\ref{coro-V}, the value function $V$ is a pointwise supremum of quadratic functions:
$$
V(x)=\sup_{j\in J} \frac{1}{2}x'P_j x
$$
where $P_j \in [\epsilon I, \lambda I]$ for all $j\in J$. Let any $V^0(x)=\frac{1}{2}x'P_0 x$ with $P_0\in [\epsilon I,\lambda I]$.
By Corollary~\ref{coro-contractioncoro},  we have:
$$
d_T(S_{T/N}^{i_N}\cdots S^{i_1}_{T/N}[V],S_{T/N}^{i_N}\cdots S^{i_1}_{T/N}[V^0])\leq e^ {-\alpha T}\log(\frac{\lambda}{\epsilon})
$$
for all $T\geq 0$, $N\in \mathbb N$ and $(i_1, \cdots,i_N)\in \M^ N$.
We also know from Lemma~\ref{lemma-St} that
$$
V=S_T[V]=\sup_{N}\sup_{i_1,\dots,i_N} S_{T/N}^{i_N}\dots S_{T/N}^{i_1}[V],
$$
and that
$$
S_T[V^0]=\sup_{N}\sup_{i_1,\dots,i_N} S_{T/N}^{i_N}\dots S_{T/N}^{i_1}[V^0].
$$
Therefore by Lemma~\ref{lemma-fig},
$$
\begin{array}{l}
d_T(V,S_T[V^0])=d_T(S_T[V],S_T[V^0])\\
\leq \displaystyle\sup_{N}\sup_{i_1,\cdots i_N} d_T(S_{T/N}^{i_N}\cdots S^{i_1}_{T/N}[V],S_{T/N}^{i_N}\cdots S^{i_1}_{T/N}[V^0])\\
 \leq e^ {-\alpha T} \log(\frac{\lambda}{\epsilon}).
\end{array}
$$
\end{proof}
Now we have all the necessary elements to prove Theorem~\ref{theo-error1}.
\begin{proof}[Proof of Theorem~\ref{theo-error1}]
  Let any $V^0(x)=\frac{1}{2}x'P_0 x$ with $P_0\in [\epsilon I,\lambda I]$. By Proposition~\ref{prop-th1} and~\eqref{a-thompfunctions}, there 
is $\alpha>0$ such that
$$
V(x)\leq e^{e^{-\alpha T}\log(\lambda/\epsilon)} S_T[V^0](x),\enspace \forall T>0, x\in \R^n.
$$
Thus there is constant $L>0$ such that
$$
V(x)\leq (1+L e^{-\alpha T}) S_T[V^0](x),\enspace \forall T>0, x\in \R^n
$$
This leads to
$$
\epsilon_0(x,T,V^0)\leq  L e^{-\alpha T} S_T[V^0](x)\leq \frac{\lambda L}{2} e^{-\alpha T}|x|^2,\enspace \forall T>0, x\in \R^n.
$$
where the last inequality follows from~\eqref{a-bound}. It is clear that the constant $K=\frac{\lambda L}{2}$ is independent of $P_0\in[\epsilon I,\lambda I]$.
\end{proof}

\section{Discrete-time approximation error estimate}\label{sec-theo2}
In this section we analyze the discrete-time approximation error $\epsilon^ {\pP_\tau}(x,\tau, N, V^0) $. Our main result is:
\begin{theo}\label{theo-error2}
Let $r>1$. Suppose that for each $\tau>0$ the pruning operation $\pP_\tau$ generates an error $O(\tau^ {r})$ (see~\eqref{a-pruningerror}).
Under Assumptions~\ref{assum1} and~\ref{assum2}, let
$\lambda \in [\lambda_1,\lambda_2)$ and $\epsilon>0$ such that~\eqref{a-invariantset} holds. Then there exist $\tau_0>0$ and $L>0$ such that
$$ \epsilon^ {\pP_\tau}(x,\tau, N, V^0)
\leq L\tau^ {\min \{1,r-1\}}|x|^ 2,\enspace \forall x,
$$
for all $N\in \mathbb N$, $\tau \leq \tau_0$ and $V^0(x)=\frac{1}{2}x'P_0x$ with $P_0\in [\epsilon I, \lambda I]$.
\end{theo}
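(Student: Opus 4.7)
The plan is to compare the exact semigroup $S_{N\tau}[V^0]$ and the approximation $\{\pP_\tau \circ \tilde S_\tau\}^N[V^0]$ multiplicatively, via Thompson's metric, and to convert the resulting bound to an additive one using the uniform upper bound~\eqref{a-bound}. The key ingredients will be the contraction of the semigroup given by Corollary~\ref{coro-contractioncoro} (extended slightly to compare two sups of quadratics) together with a one-step error estimate controlling the local discretization and pruning error.

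First I would introduce telescoping intermediates $Y_n := S_{(N-n)\tau}[\tilde V_n]$, where $\tilde V_n := \{\pP_\tau \circ \tilde S_\tau\}^n [V^0]$, so that $Y_0 = S_{N\tau}[V^0]$ is the exact quantity and $Y_N = \tilde V_N$ is the approximated one. The triangle inequality for $d_T$ and the semigroup property $S_{(N-n)\tau} = S_{(N-n-1)\tau} \circ S_\tau$ then give
\begin{align*}
d_T(Y_0, Y_N) \leq \sum_{n=0}^{N-1} d_T\!\left(S_{(N-n-1)\tau}[S_\tau \tilde V_n],\, S_{(N-n-1)\tau}[\pP_\tau \tilde S_\tau \tilde V_n]\right).
\end{align*}

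Next I would prove three auxiliary lemmas. \emph{Lemma one:} by induction on $n$, each $\tilde V_n$ is a pointwise supremum of quadratic forms with matrices in the invariant interval $[\epsilon I,\lambda I]$, using~\eqref{a-invariantset} for $\tilde S_\tau$ and the subset-selection character of $\pP_\tau$. \emph{Lemma two:} a strengthening of Corollary~\ref{coro-contractioncoro} asserting
$$ d_T(S_s[W_1], S_s[W_2]) \leq e^{-\alpha s}\, d_T(W_1, W_2) $$
for $W_1, W_2$ both sups of quadratics in a slightly enlarged invariant class. This would follow by combining Theorem~\ref{theo-riccaticontraction} with the max-plus linearity~\eqref{a-mplin} and Lemma~\ref{lemma-St}: the Riccati contraction yields $M_s^m[(1+\eta)P] \leq (1 + e^{-\alpha s}\eta) M_s^m[P]$ for small $\eta$, and taking pointwise sups over the quadratic summands and over piecewise constant controls preserves this multiplicative bound. \emph{Lemma three:} a one-step error estimate $d_T(S_\tau[W], \pP_\tau \tilde S_\tau[W]) \leq C\tau^2 + L\tau^r$ for $W$ in the invariant class, obtained by splitting into the discretization error $d_T(S_\tau[W], \tilde S_\tau[W]) = O(\tau^2)$ and the pruning error $d_T(\tilde S_\tau[W], \pP_\tau \tilde S_\tau[W]) \leq \log(1 + L\tau^r)$ coming from~\eqref{a-pruningerror}.

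Combining the three lemmas gives $d_T(Y_n, Y_{n+1}) \leq e^{-\alpha(N-n-1)\tau}(C\tau^2 + L\tau^r)$, and summing the geometric series yields $d_T(Y_0, Y_N) \leq (C\tau^2 + L\tau^r)/(1-e^{-\alpha\tau}) = O(\tau^{\min\{1, r-1\}})$ for $\tau$ sufficiently small. The additive estimate then follows from $Y_0 - Y_N \leq (e^{d_T(Y_0, Y_N)} - 1) Y_N \leq 2\, d_T(Y_0, Y_N)\cdot \tfrac{\lambda}{2}|x|^2$, using that $Y_N = \tilde V_N \leq \tfrac{\lambda}{2}|x|^2$ by Lemma one.

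The hardest step will be establishing the one-step discretization estimate $d_T(S_\tau[W], \tilde S_\tau[W]) = O(\tau^2)$ uniformly over $W$ in the invariant class, which expresses the intuition that allowing control switches within a short interval $[0,\tau]$ improves the value by at most order $\tau^2$ (not $\tau$) multiplicatively. This will likely require a Taylor expansion of the Hamilton-Jacobi-Bellman semigroup for short times against the single-mode Riccati flow, exploiting the uniform smoothness and uniform bounds furnished by the invariant class; I expect this to be the content of one of the technical lemmas deferred to Section~\ref{sec-tecprof}. A secondary obstacle is that Lemma two requires absorbing a factor $1+\eta$ into the invariant interval, which is handled by choosing $\lambda$ strictly inside $[\lambda_1,\lambda_2)$ so that $(1+\eta)\lambda$ remains below $\lambda_2$ for $\tau\leq\tau_0$.
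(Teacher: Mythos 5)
Your proposal is correct and follows essentially the same strategy as the paper: a one-step multiplicative error bound (Lemma~\ref{l-onestep}, packaged as Corollary~\ref{coro-1}), a Thompson-metric contraction estimate on a slightly enlarged invariant interval $[\epsilon I,\lambda' I]$, and a geometric-series accumulation converted to an additive bound via~\eqref{a-bound}. The paper organizes the accumulation as a direct induction yielding $S_{N\tau}[V^0]\le(1+L\tau^{\min\{2,r\}})^{1+e^{-\alpha\tau}+\cdots+e^{-(N-1)\alpha\tau}}\{\pP_\tau\circ\tilde S_\tau\}^N[V^0]$, whereas you telescope in $d_T$; taking logarithms shows the two bounds coincide.
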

The remaining part of the section is devoted to the proof of Theorem~\ref{theo-error2}.
We first state a technical lemma which is proved  in Section~\ref{sec-pflone}.

\begin{lemma}\label{l-onestep}
 Let $\mathcal K\subset S_n$ be a compact convex subset. There exist $\tau_0>0$ and $L>0$ such that
$$
S_\tau[V^0](x)\leq \tilde S_\tau[V^0](x)+L\tau^ 2 |x|^2, \enspace \forall x,
$$
for all $\tau \in[0,\tau_0]$ and $V^0(x)=\frac{1}{2}x'P_0x$ with $P_0\in \mathcal K$.
\end{lemma}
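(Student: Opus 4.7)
\medskip

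\noindent\emph{Plan.} The lemma claims that allowing $\mu$ to switch during a small interval $[0,\tau]$ improves the payoff by at most $O(\tau^2|x|^2)$ compared to keeping $\mu$ constant. The strategy is to Taylor-expand both $S_\tau[V^0](x)$ and $\tilde S_\tau[V^0](x)$ at $\tau = 0$ and check that their $O(\tau)$ terms coincide. For $\tilde S_\tau$, since $S_\tau^m[V^0](x) = \tfrac12 x'M_\tau^m[P_0]x$ and $\Phi_m$ is a (polynomial, hence smooth) vector field on $\sym_n$, for any compact $\mathcal K \subset \sym_n$ there are $\tau_0, C > 0$ such that $\|M_\tau^m[P_0] - P_0 - \tau\Phi_m(P_0)\| \leq C\tau^2$ for all $P_0\in \mathcal K$, $m\in \M$, $\tau\leq \tau_0$. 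Taking the max over $m$ gives
\[
\tilde S_\tau[V^0](x) = V^0(x) + \tfrac{\tau}{2}\max_{m\in \M}x'\Phi_m(P_0)x + O(\tau^2|x|^2),
\]
uniformly for $P_0\in \mathcal K$.

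\noindent For $S_\tau[V^0]$, the main tool is a completing-the-square identity. For any admissible pair $(w,\mu)$ with trajectory $\xi$, differentiating $t\mapsto V^0(\xi_t)$ along \eqref{eqdfgg}, using the definition of $\Phi_m$, and gathering the $w_t$-dependent terms, one obtains
\[
J(x,\tau;V^0;w,\mu) = V^0(x) + \int_0^\tau \tfrac12 \xi_t'\Phi_{\mu_t}(P_0)\xi_t\,dt - \int_0^\tau \tfrac{\gamma^2}{2}|w_t - w_t^*|^2\,dt,
\]
where $w_t^* := \tfrac{1}{\gamma^2}(\sigma^{\mu_t})'P_0\xi_t$ is the pointwise Hamiltonian maximiser in $w$. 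Setting $u_t := w_t - w_t^*$, the dynamics becomes $\dot\xi_t = (A^{\mu_t}+\Sigma^{\mu_t}P_0)\xi_t + \sigma^{\mu_t}u_t$; since the modified drift and diffusion coefficients are uniformly bounded for $P_0\in \mathcal K$ and $m\in \M$, Gronwall's inequality yields, for $\tau\leq \tau_0$ (shrunk as needed),
\[
|\xi_t|\leq C(|x| + \sqrt{\tau}\|u\|_{L^2[0,\tau]}), \qquad |\xi_t - x|\leq C(\tau|x| + \sqrt\tau\|u\|_{L^2[0,\tau]}), \quad t\in [0,\tau].
\]

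\noindent The final step replaces $\xi_t$ by $x$ inside the quadratic forms. Using $|\xi_t'\Phi_m(P_0)\xi_t - x'\Phi_m(P_0)x| \leq C|\xi_t - x|(|\xi_t|+|x|)$, uniformly in $m\in \M$ and $P_0\in \mathcal K$, together with $\Phi_{\mu_t}(P_0) \leq \sup_m \Phi_m(P_0)$ applied pointwise under the integral, one obtains
\[
\int_0^\tau \tfrac12 \xi_t'\Phi_{\mu_t}(P_0)\xi_t\,dt \leq \tfrac{\tau}{2}\max_m x'\Phi_m(P_0)x + C\bigl(\tau^2|x|^2 + \tau^{3/2}|x|\|u\|_{L^2} + \tau^2\|u\|_{L^2}^2\bigr).
\]
Subtracting the penalty $\tfrac{\gamma^2}{2}\|u\|_{L^2}^2$ and taking the supremum: for $\tau$ small the coefficient of $\|u\|_{L^2}^2$ is $\leq -\gamma^2/4$, so the resulting concave quadratic in $\|u\|_{L^2}$ is maximised at $\|u\|_{L^2} = O(\tau^{3/2}|x|)$ with optimal value $O(\tau^2|x|^2)$. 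Taking the further supremum over $\mu$ yields $S_\tau[V^0](x) \leq V^0(x) + \tfrac{\tau}{2}\max_m x'\Phi_m(P_0)x + C\tau^2|x|^2$, and subtracting the expansion of $\tilde S_\tau$ proves the lemma.

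\noindent\emph{Main obstacle.} The crux is controlling the a priori unbounded $w$ in the supremum defining $S_\tau$. The completing-the-square rewrite is essential: it extracts a definite negative quadratic penalty on $u = w-w^*$ that is strong enough to absorb the $O(\tau^{3/2}|x|\|u\|_{L^2})$ cross term and the small positive $O(\tau^2\|u\|_{L^2}^2)$ contribution coming from the state perturbation, so that the net excess over the non-switching value is of the desired order $\tau^2|x|^2$. Uniformity in $P_0\in \mathcal K$ is automatic since every constant above depends only on $\sup_{P_0\in \mathcal K}\|P_0\|$ and on the fixed problem data.
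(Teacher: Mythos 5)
Your proof is correct, and it takes a genuinely different route from the paper. The paper first proves that $S_\tau[V^0]=\sup_N\sup_{i_1,\dots,i_N}S_{\tau/N}^{i_N}\cdots S_{\tau/N}^{i_1}[V^0]$ (Lemma~\ref{lemma-St}, via Lusin's theorem), then Taylor-expands the constant-mode Riccati flows (Lemma~\ref{l-TaylorD}), and finally runs a careful induction over the $N$ short steps to show that every iterated composition $M^{i_N}_{\tau/N}\cdots M^{i_1}_{\tau/N}[P_0]$ stays within $O(\tau^2)$ of $P_0+\frac{\tau}{N}\sum_k\Phi_{i_k}(P_0)$, the hard part being to keep the accumulated error uniform in $N$. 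You instead work directly with the variational definition of $S_\tau$: the completing-the-square identity turns the supremum over $w$ into a definite $-\tfrac{\gamma^2}{2}\|u\|_{L^2}^2$ penalty on $u=w-w^*$, Gronwall controls the state perturbation in terms of $\|u\|_{L^2}$, and optimizing the resulting concave quadratic in $\|u\|_{L^2}$ delivers the $O(\tau^2|x|^2)$ bound. This bypasses Lemma~\ref{lemma-St} entirely and is arguably more direct; on the other hand, the paper needs Lemma~\ref{lemma-St} anyway for the infinite-horizon estimate (Corollary~\ref{coro-V}, Proposition~\ref{prop-th1}), so its reuse here is economical. Two minor points. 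First, your identity presupposes $\Sigma^m=\frac{1}{\gamma^2}\sigma^m(\sigma^m)'$, which is what the Hamiltonian $H^m$ and the semigroup relation~\eqref{a-S-M} actually require; the displayed normalization $\Sigma^m=\frac{\gamma^2}{2}\sigma^m(\sigma^m)'$ in the paper appears to be a typo and your constant is the consistent one. Second, the phrase ``$\Phi_{\mu_t}(P_0)\leq\sup_m\Phi_m(P_0)$ applied pointwise'' should be read as a scalar inequality between quadratic forms evaluated at $\xi_t$ (the matrices need not have a Loewner supremum), which is indeed how you use it.
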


Now we take into account the pruning procedure and analyze the error of the following approximation
$$
S_\tau \simeq \pP_\tau \circ \tilde S_\tau.
$$
Below is a direct consequence of Lemma~\ref{l-onestep} and~\eqref{a-invariantset}.
\begin{coro}\label{coro-1}
 Let $\epsilon$, $\lambda$, $r$ and $\pP_\tau$ be as in~Theorem~\ref{theo-error2}. Then there exist $\tau_0>0$ and $L>0$ such that:
$$
S_\tau[V^0](x)\leq  (1+L\tau^{\min\{2,r\}})\pP_\tau \circ \tilde S_\tau[V^0](x),\enspace \forall x,
$$
for all $\tau \in[0,\tau_0]$ and $V^0(x)=\frac{1}{2}x'P_0x$ with $P_0\in [\epsilon I, \lambda I]$.
\end{coro}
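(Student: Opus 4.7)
The strategy is to chain three estimates: the one-step additive error from Lemma~\ref{l-onestep}, the lower bound on $\tilde S_\tau[V^0]$ coming from the invariance~\eqref{a-invariantset}, and the pruning bound~\eqref{a-pruningerror}. The first two combine to convert the additive $O(\tau^2)$ error into a multiplicative one, after which the pruning step simply contributes an extra multiplicative factor $1+L\tau^r$.

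First I would apply Lemma~\ref{l-onestep} with the compact convex set $\mathcal{K}=[\epsilon I,\lambda I]$ to get constants $\tau_1>0$ and $L_1>0$ such that
$$
S_\tau[V^0](x)\leq \tilde S_\tau[V^0](x)+L_1\tau^2|x|^2
$$
for all $\tau\in[0,\tau_1]$ and all $V^0(x)=\tfrac12 x'P_0x$ with $P_0\in[\epsilon I,\lambda I]$.

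Next I would use~\eqref{a-invariantset} to produce a uniform lower bound on $\tilde S_\tau[V^0]$. Indeed, since $\tilde S_\tau=\sup_{m\in\M}S_\tau^m$, the relation~\eqref{a-S-M} gives
$$
\tilde S_\tau[V^0](x)=\sup_{m\in\M}\tfrac12 x'M_\tau^m[P_0]x,
$$
and by~\eqref{a-invariantset} each $M_\tau^m[P_0]\in[\epsilon I,\lambda I]$, so $\tilde S_\tau[V^0](x)\geq \tfrac{\epsilon}{2}|x|^2$. Substituting $|x|^2\leq \tfrac{2}{\epsilon}\tilde S_\tau[V^0](x)$ in the previous inequality turns the additive error into a multiplicative one:
$$
S_\tau[V^0](x)\leq \Bigl(1+\tfrac{2L_1}{\epsilon}\tau^2\Bigr)\tilde S_\tau[V^0](x).
$$

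Finally I would invoke the pruning bound~\eqref{a-pruningerror} applied to $\tilde S_\tau[V^0]$ (which is itself a pointwise maximum of quadratics): there exists $L_2>0$ with
$$
\tilde S_\tau[V^0](x)\leq (1+L_2\tau^r)\,\pP_\tau\circ\tilde S_\tau[V^0](x).
$$
Multiplying the last two displays yields
$$
S_\tau[V^0](x)\leq \Bigl(1+\tfrac{2L_1}{\epsilon}\tau^2\Bigr)(1+L_2\tau^r)\,\pP_\tau\circ\tilde S_\tau[V^0](x),
$$
and for $\tau\in[0,\tau_0]$ with $\tau_0\leq \tau_1$ chosen small enough, the product $(1+\tfrac{2L_1}{\epsilon}\tau^2)(1+L_2\tau^r)$ is bounded by $1+L\tau^{\min\{2,r\}}$ for a suitable $L>0$. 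Since no step is technically difficult—the only thing to watch is the additive-to-multiplicative conversion via the $\epsilon I$ lower bound—this is essentially a bookkeeping argument and I do not expect a serious obstacle.
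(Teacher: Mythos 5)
Your argument is correct and is precisely the chain of estimates the paper has in mind when it calls the corollary a ``direct consequence of Lemma~\ref{l-onestep} and~\eqref{a-invariantset}'': Lemma~\ref{l-onestep} on $\mathcal K=[\epsilon I,\lambda I]$, the lower bound $\tilde S_\tau[V^0](x)\geq\tfrac{\epsilon}{2}|x|^2$ from the invariant interval to convert the additive $O(\tau^2)$ error into a multiplicative one, and then the pruning bound~\eqref{a-pruningerror}. Nothing to add.
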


We are ready to give a proof of Theorem~\ref{theo-error2}:
\begin{proof}[Proof of Theorem~\ref{theo-error2}]
Denote $s=\min\{2,r\}$.
Let any $\lambda'>0$ such that
$$
\lambda <\lambda' < \lambda_2.
$$
Denote $\delta=\lambda'/\lambda$. Consider the two compact convex subsets $\K_0=[\epsilon I,\lambda I]$ and $\K_1=[\epsilon I,\lambda' I]$.
It is easily verified that:
$$  \Phi_m(\lambda' I)\leq 0,\enspace \forall m\in \M.$$Therefore for all $P_0\in \K_0$, $P_1\in \K_1$, $t\geq 0$ and $m\in \M$,
\begin{align}\label{C2}
M_t^m[P_0] \in \K_0,\quad
M_t^m[P_1]\in \K_1.
\end{align}
 By Corollary~\ref{coro-1},  there is $\tau_0$ and $L>0$ such that for all $\tau\in [0,\tau_0]$ and $V^0=\frac{1}{2}x'P x$ with $P\in \K_1$:
\begin{align}\label{afdfsdsf}
S_\tau[V^0]\leq (1+L\tau^{s}) \pP_\tau\circ \tilde S_\tau[V^0].
\end{align}
Let $\tau_0>0$ be sufficiently small such that:
$$
(1+L\tau^ {s})^ {\frac{1}{1-e^ {-\alpha \tau}}}\leq \delta, \quad \forall \tau\in [0,\tau_0].
$$
Let any $V^0(x)=\frac{1}{2}x'P_0x$ with $P_0\in \K_0$ and $\tau \in [0,\tau_0]$, we are going to prove by induction on $N\in \mathbb{N}$ the following inequalities:
$$
S_{N\tau}[V^0]\leq (1+L\tau^{s})^ {1+e^ {-\alpha \tau}+\cdots+e^{-(N-1)\alpha \tau}} \{\pP_\tau \circ \tilde S_\tau\}^ N[V^0] ,\quad \forall N\in \mathbb{N}.
$$
The case $N=1$ is already given in~\eqref{afdfsdsf}. Suppose that the above inequality is true for some $k\in \mathbb{N}$, that is,
$$
S_{k\tau}[V^0]\leq L_k\{\pP_\tau\circ \sup_m S^ m_\tau\}^ k[V^ 0]
$$
where $L_k=(1+L\tau^ {s})^ {1+e^ {-\alpha \tau}+\cdots+e^{-(k-1)\alpha \tau}}$.
We denote by $I_k \subset \M^k$ the subset such that
$$
\{\pP_\tau\circ \sup_m S^ m_\tau\}^ k[V^ 0]=\sup_{(i_1,\cdots,i_k)\in I_k} S_\tau^{i_k}\cdots S_\tau^{i_1} [V^0].
$$
Thus,
\begin{align}\label{a-k}
S_{k\tau}[V^0] \leq \sup_{(i_1,\cdots,i_k)\in I_k} L_k S_\tau^{i_k}\cdots S_\tau^{i_1} [V^0].
\end{align}
From \eqref{C2}, we know that for all $(i_1,\cdots,i_k)\in I_k$
\begin{align}\label{a-K1}
M_ \tau^{i_k}\cdots M_\tau^{i_1} [P_0]\in \K_0.
\end{align}
Besides,
$$
1\leq L_k\leq (1+L\tau^{s})^ {\frac{1}{1-e^ {-\alpha \tau}}}\leq \delta.
$$
Thus for all $(i_1,\cdots,i_k)\in I_k$,
\begin{align}\label{a-K0}
L_k (M_ \tau^{i_k}\cdots M_\tau^{i_1} [P_0])\in \K_1
\end{align}
Recall that
$$
L_k S_\tau^{i_k}\cdots S_\tau^{i_1} [V^0](x)=\frac{L_k}{2}x'(M_ \tau^{i_k}\cdots M_\tau^{i_1} [P_0]) x,$$
then by applying~\eqref{a-k} and~\eqref{afdfsdsf}, we obtain that
\begin{align}\label{a-iteration}
\begin{array}{l}
S_{(k+1)\tau}[V^0]=S_{\tau}[S_{k\tau}[V^0]]\\
\leq \displaystyle\sup_{(i_1,\cdots,i_k)\in I_k} S_{\tau}[L_k S_\tau^{i_k}\cdots S_\tau^{i_1} [V^0]]\\
\leq \displaystyle\sup_{(i_1,\cdots,i_k)\in I_k} (1+L\tau^ {s}) \pP_\tau\circ \tilde S_\tau[ [L_k S_\tau^{i_k}\cdots S_\tau^{i_1} [V^0]]]\\
\end{array}
\end{align}
Now by Theorem~\ref{theo-riccaticontraction}, there is $\alpha>0$ such that for all $P_1,P_2\in \K_1$ and $m\in \M$
$$
d_T(M_\tau^m[P_1],M_\tau^m[P_2])\leq e^{-\alpha \tau} d_T(P_1,P_2)
$$
Therefore from~\eqref{a-K1} and~\eqref{a-K0} we get that for any $(i_1,\dots,i_k)\in I_k$ and $m\in \M$
$$
\begin{array}{ll}
&d_T(M_\tau^mM_ \tau^{i_k}\cdots M_\tau^{i_1} [P_0], M_\tau^m[L_k(M_ \tau^{i_k}\cdots M_\tau^{i_1} [P_0])])\\
&\leq e^ {-\alpha \tau} d_T(M_ \tau^{i_k}\cdots M_\tau^{i_1} [P_0],L_k(M_ \tau^{i_k}\cdots M_\tau^{i_1} [P_0]))=e^ {-\alpha \tau}\log L_k.
\end{array}
$$
 This implies that
$$
M_\tau^m[L_k(M_ \tau^{i_k}\cdots M_\tau^{i_1} [P_0])]\leq L_k^ {e^ {-\alpha \tau}} M_\tau^mM_\tau^{i_k}\cdots M_\tau^{i_1}[P_0], \enspace \forall m\in \M
$$
which is,
$$
S_{\tau}^m [L_k S_\tau^{i_k}\cdots S_\tau^{i_1}[V^0]]\leq L_k^ {e^ {-\alpha \tau}} S_{\tau}^m S_\tau^{i_k}\cdots S_\tau^{i_1}[V^0], \enspace \forall m\in \M.
$$
Therefore we deduce from the inequality~\eqref{a-iteration}:
$$
\begin{array}{ll}
S_{(k+1)\tau}[V^0]
&\leq (1+L\tau^{s} ) \pP_\tau[\displaystyle\sup_{m\in \M,(i_1,\cdots,i_k)\in I_k} S_{\tau}^m [L_k S_\tau^{i_k}\cdots S_\tau^{i_1}[V^0]]]
\\
&\leq (1+L\tau^ {s} )L_k^ {e^ {-\alpha \tau}} \pP_\tau[\displaystyle\sup_{m\in \M,(i_1,\cdots,i_k)\in I_k}  S_{\tau}^m S_\tau^{i_k}\cdots S_\tau^{i_1}[V^0]]\\
&= (1+L\tau^ {s} )^ {1+e^ {-\alpha \tau}+\cdots+e^{-k\alpha \tau}} \{\pP_\tau\circ \tilde S_\tau\}^ {k+1}[V^0].
\end{array}
$$
Thereby we proved that
$$
S_{N\tau}[V^0]\leq (1+L\tau^ {s} )^ {\frac{1}{1-e^ {-\alpha \tau}}} \{\pP\circ \tilde S_\tau\}^ N [V^0],\quad \forall N\in \mathbb{N}.
$$
 Note that
$$
\lim_{\tau\rightarrow 0^+} \frac{(1+L\tau^ {s} )^ {\frac{1}{1-e^ {-\alpha \tau}}}-1}{\tau^ {s-1}}=\frac{L}{\alpha},
$$
from which we deduce the existence of $\tau_0$ and $K>0$ such that for all $\tau \in[0,\tau_0]$, $N\in \mathbb{N}$ and $V^0(x)=\frac{1}{2}x'Px$ with $P\in [\epsilon I,\lambda I]$
$$
\{S_\tau\}^N[V^0]\leq (1+K\tau ^ {s-1})\{\pP\circ \tilde S_\tau\}^N[V^0].
$$
which leads to 
$$
\epsilon^ {\pP_\tau}(x,\tau, N, V^0) \leq K \tau^{\min\{1,r-1\}} \{\pP\circ \tilde S_\tau\}^N[V^0] \leq \frac{K\lambda }{2} \tau^{\min\{1,r-1\}}  |x|^2.
$$
\end{proof}

\begin{rem}\label{rem-crucial}
 It should be pointed out that the crucial point is having $\alpha>0$. If this is not the case ($\alpha=0$), then the iteration~\eqref{a-iteration} only leads to:
$$
\begin{array}{l}
d_T(S_{N\tau}[V^0],\{\pP_\tau \circ \tilde S_\tau\}^ {N}[V^0])\leq LN\tau^ {s},\enspace \forall N\in \mathbb N.
\end{array}
$$
\end{rem}

\section{Proofs of the technical lemmas}\label{sec-tecprof}

\subsection{Proof of Lemma~\ref{lemma-St}}\label{sec-pflSt}
For two functions $\mu,\nu\in \D_T$ we consider the metric $d(\mu,\nu)$ defined by the measure of subset on which the two controls $\mu$ and $\nu$ differ from each other:
\begin{align}\label{a-dmunu}
d(\mu,\nu)=\int_{0}^T 1_{\mu\neq \nu} dt.
\end{align}
The proof of Lemma~\ref{lemma-St} needs the next lemma. 
It shows that 
the objective function is continued on the variable $\mu\in \D_T$ with respect to the metric $d$ defined in~\eqref{a-dmunu}.
\begin{lemma}\label{anapro}
Let $V^ 0:\R^n\rightarrow \R$ be a locally Lipschitz function. Let $x\in \R^ n$ and $T>0$.
 Given $\mu\in \D_T$ and $w\in \W_T$, for any $\epsilon>0$, there is $\delta_0>0$ such that 
$$
|J(x,T;V^ 0;\mu,w)-J(x,T;V^ 0;\tilde \mu,w)|\leq \epsilon,
$$
for all $\tilde \mu\in \D_T$ such that $d(\mu,\tilde \mu)\leq \delta_0$. 
\end{lemma}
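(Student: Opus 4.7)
The plan is to compare the trajectories $\xi$ and $\tilde\xi$ driven respectively by $(\mu,w)$ and $(\tilde\mu,w)$ from the common initial state $x$, show that $\|\xi-\tilde\xi\|_{\infty,[0,T]}$ is controlled by $d(\mu,\tilde\mu)$, and then bound each term of $J$ in turn.

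Since $\M$ is finite, $C_A:=\max_m\|A^m\|$, $C_D:=\max_m\|D^m\|$ and $c_\sigma$ are uniform constants, and a standard Grönwall argument applied to~\eqref{eqdfgg} yields a radius $R$ depending only on $|x|$, $T$ and $\|w\|_{L^2([0,T])}$ such that $|\xi_t|,|\tilde\xi_t|\leq R$ on $[0,T]$, uniformly in the controls. For the difference I would write
$$
\dot\xi - \dot{\tilde\xi} = A^{\tilde\mu_t}(\xi-\tilde\xi) + (A^{\mu_t}-A^{\tilde\mu_t})\xi + (\sigma^{\mu_t}-\sigma^{\tilde\mu_t})w_t,
$$
where the last two inhomogeneous terms vanish on $\{\mu_t=\tilde\mu_t\}$. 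Their $L^1([0,T])$ norms are respectively $O(R\,d(\mu,\tilde\mu))$ and, by Cauchy--Schwarz applied on the disagreement set, $O(\|w\|_{L^2}\sqrt{d(\mu,\tilde\mu)})$. A second Grönwall estimate then gives $\|\xi-\tilde\xi\|_{\infty,[0,T]}\leq C_1\sqrt{d(\mu,\tilde\mu)}$ for a constant $C_1$ depending on the above data.

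To conclude, I would split $J(x,T;V^0;\mu,w)-J(x,T;V^0;\tilde\mu,w)$ into three pieces. The noise term $-\tfrac{\gamma^2}{2}|w_t|^2$ is identical in both objectives. For the running reward, decomposing
$$
\xi_t'D^{\mu_t}\xi_t - \tilde\xi_t'D^{\tilde\mu_t}\tilde\xi_t = \xi_t'D^{\mu_t}(\xi_t-\tilde\xi_t) + (\xi_t-\tilde\xi_t)'D^{\mu_t}\tilde\xi_t + \tilde\xi_t'(D^{\mu_t}-D^{\tilde\mu_t})\tilde\xi_t
$$
shows that the first two terms contribute $O(R\,\|\xi-\tilde\xi\|_\infty)$ after integration, while the third vanishes off $\{\mu\neq\tilde\mu\}$ and is therefore $O(R^2 d(\mu,\tilde\mu))$. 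The terminal gap $|V^0(\xi_T)-V^0(\tilde\xi_T)|\leq L_R\,\|\xi-\tilde\xi\|_\infty$ is controlled by a Lipschitz constant of $V^0$ on the closed ball of radius $R$, which exists by the local Lipschitz hypothesis. Summing gives $|\Delta J|\leq C_2\sqrt{d(\mu,\tilde\mu)}$, so $\delta_0:=(\epsilon/C_2)^2$ works.

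The only delicate point I anticipate is that $w\in\W$ has merely an $L^2$ bound on $[0,T]$, with no pointwise control, which prevents treating the $\sigma$-perturbation by a direct sup-norm estimate. Cauchy--Schwarz on the disagreement set delivers the $\sqrt{d(\mu,\tilde\mu)}$ modulus, and since only the limit $\delta_0\to 0$ is needed the loss of exponent is harmless. Everything else reduces to Grönwall's inequality and the triangle inequality.
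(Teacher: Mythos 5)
Your proposal is correct and follows essentially the same route as the paper's proof: a Gr\"onwall estimate combined with Cauchy--Schwarz on the disagreement set to obtain $\|\xi-\tilde\xi\|_\infty = O(\sqrt{d(\mu,\tilde\mu)})$, then the same three-term decomposition of the running quadratic cost and the local Lipschitz bound for the terminal term, yielding a modulus $O(\sqrt{d(\mu,\tilde\mu)}+d(\mu,\tilde\mu))$. The only cosmetic difference is that the paper folds all the constants into a single symbol $L$ rather than tracking $R$, $C_1$, $C_2$ separately.
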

\begin{proof}
 Let any $\tilde \mu \in \D_T$ and denote:
$$
\delta=d(\mu,\tilde \mu).
$$
Let $\xi$ and $\tilde \xi$ be respectively the solutions to~\eqref{eqdfgg} under the control $(\mu,w)$ and $(\tilde \mu,w)$. Thus
$$
\xi_t-\tilde \xi_t=\int_0^t A^ {\mu_s}\xi_s+\sigma^ {\mu_s}w_s-(A^ {\tilde \mu_s}\tilde \xi_s+\sigma^ {\tilde \mu_s}w_s) ds,\enspace \forall t\in [0,T].
$$
Denote
 $$L=\displaystyle\max (\max_m \|A^m\|,\max_m |\sigma^m|,\max_m \|D^m\|,(\int_0^T(|\xi_s|+|w_s|)^2 ds)^{1/2}).$$
We have:
$$
\begin{array}{ll}
|\xi_t-\tilde \xi_t| &\leq \int_0^t |A^{\mu_s}\xi_s-A^{\tilde \mu_s}\xi_s|+|A^{\tilde \mu_s}\xi_s-A^{\tilde \mu_s}\tilde \xi_s|+|\sigma^{\mu_s}-\sigma^{\tilde \mu_s}||w_s|ds
\\&\leq \int_0^t L |\xi_s-\tilde \xi_s | ds+\int_0^t 1_{\mu\neq \tilde \mu}(\|A^{\mu_s}-A^{\tilde \mu_s}\||\xi_s|+|\sigma^{\mu_s}-\sigma^{\tilde \mu_s}||w_s|) ds\\
&\leq \int_0^t L|\xi_s-\tilde \xi_s| ds+2L\int_0^t  1_{\mu\neq \tilde \mu}(|\xi_s|+|w_s|)ds \\
&\leq \int_0^t L|\xi_s-\tilde \xi_s| ds+2L(\int_0^t 1_{\mu\neq \tilde \mu} ds)^{1/2}(\int_0^t (|\xi_s|+|w_s|)^2)^{1/2}\\
&\leq \int_0^t L|\xi_s-\tilde \xi_s| ds+2L^2\delta^{\frac{1}{2}},\qquad \qquad \qquad \forall t\in [0,T].
\end{array}
$$
By Gronwall's Lemma,
$$
|\xi_t-\tilde \xi_t|\leq 2L^2 \delta^{\frac{1}{2}}e^{Lt}\leq  L \delta^{\frac{1}{2}},\quad \forall t\in [0,T].
$$
Then
$$
|\tilde \xi_t|\leq \sup_{t\in[0,T]}|\xi_t|+L\delta ^{\frac{1}{2}}\leq L,\forall t\in [0,T].
$$ Note that $L$ is independent of $\tilde \mu$.
Now by the local Lipschitz property of $V^0$ and the boundedness of $\xi$ and $\tilde \xi$, there is $L>0$ such that:
$$
|V^0(\xi_T)-V^0(\tilde \xi_T)|\leq L |\xi_T-\tilde \xi_T|\leq L \delta^{\frac{1}{2}}
$$
Besides,
$$
\begin{array}{l}
|\int_0^T \xi_t' D^{\mu_t} \xi_t-\tilde \xi_t' D^{\tilde \mu_t} \tilde \xi_t dt|\\
\leq \int_0^T |\xi_t'D^{\mu_t}(\xi_t-\tilde \xi_t)|+|\tilde \xi_t' D^{\mu_t}(\xi_t-\tilde \xi_t)|+|\tilde \xi_t'(D^{\mu_t}-D^{\tilde \mu_t})\tilde \xi_t|dt\\
\leq L\int_0^T (|\xi_t-\tilde \xi_t|+ 1_{\mu\neq \tilde \mu} ) dt\\
\leq L(\delta ^{\frac{1}{2}}+\delta)
\end{array}
$$
Thus there is a constant $L$ independent of $\tilde \mu$ such that:
$$
|J(x,T;V^ 0;\mu,w)-J(x,T;V^ 0;\tilde \mu,w)|\leq L(\delta^{\frac{1}{2}}+\delta)
$$
whence for any $\epsilon>0$ there is $\delta_0>0$ such that 
$$
|J(x,T;V^ 0;\mu,w)-J(x,T;V^ 0;\tilde \mu,w)|\leq \epsilon
$$
for all $\tilde \mu \in \D_T$ such that $d(\mu,\tilde \mu)\leq \delta_0$.
\end{proof}

Using this, we can prove Lemma~\ref{lemma-St}:
\begin{proof}[Proof of Lemma~\ref{lemma-St}]
 Let $V^ 0$ be a locally Lipschitz function. Fix $x\in \R^n$. Let $\mu \in \D_\tau$ and $w\in \W_\tau$ be $\frac{\epsilon}{2}$-optimal for $S_\tau[V^0](x)$, that is:
\begin{align}\label{artdg}
S_\tau[V^0](x)\leq J(x,\tau;V^ 0;\mu,w)+\frac{\epsilon}{2}.
\end{align}
By Lemma~\ref{anapro}, there is $\delta_0>0$ such that:
\begin{align}\label{affghss}
|J(x,\tau;V^ 0;\tilde \mu,w)-J(x,\tau;V^0;\mu, w)|\leq \frac{\epsilon}{2}
\end{align}
for all $\tilde \mu \in \D_\tau$ such that $d(\mu,\tilde \mu)\leq \delta_0$. 
Now it remains to prove that there is at least one piecewise constant function $\tilde \mu \in \D_\tau$
such that $d(\mu,\tilde \mu)\leq \delta_0$.
To this end, by Lusin's theorem~\cite{MR1681462}, there is a compact $K\subset [0,\tau]$ such that
                                                               $$
\int_0^\tau 1_{K}>\tau-\delta_0
$$
                     and                                     
 the restriction of $\mu$ on $K$ is continuous, thus uniformly continuous. Let $\delta>0$ such that for all $t,s\in K$ and $|t-s|\leq \delta$,
$$
|\mu(t)-\mu(s)|\leq \frac{1}{2}
$$
which implies
$$
\mu(t)=\mu(s).
$$
Now let $N_0\in \mathbb{N}$ such that $\frac{1}{N_0}<\delta$. We construct a piecewise constant function $\tilde \mu\in \D_\tau$ as following. For $i\in \{0,1\cdots,N_0-1\}$, let
$$
\tilde \mu(\frac{i}{N_0}\tau)=\left\{\begin{array}{ll}\mu(s),& \textrm{if~ there~is~}  s\in K\cap [\frac{i}{N_0}\tau,\frac{i+1}{N_0}\tau)\\
1 ,&\textrm{else}
\end{array}\right.
$$
and
$$
\tilde \mu(t)=\tilde \mu(\frac{i}{N_0}\tau), \quad t\in [\frac{i}{N_0}\tau,\frac{i+1}{N_0}\tau).
$$
Since $\mu(s)=\mu(t)$ for all $s,t\in K\cap [\frac{i}{N_0}\tau,\frac{i+1}{N_0}\tau)$, it follows that 
$$
\mu(t)=\tilde \mu(t),\quad \forall t\in K.
$$
Thus
$$
\int_0^\tau 1_{\mu \neq \tilde \mu} dt \leq \int_0^\tau 1- 1_{ K} dt \leq \delta_0.
$$
So $d(\mu,\tilde \mu)\leq \delta_0$ and $\tilde \mu$ is constant on interval 
$[\frac{i}{N_0}\tau,\frac{i+1}{N_0}\tau)$ for all $i\in \{0,1,\cdots,N_0-1\}$. Hence, by~\eqref{affghss},
$$
J(x,\tau;V^ 0;\mu,w)\leq J(x,\tau;V^ 0;\tilde \mu,w)+\frac{\epsilon}{2}\leq \sup_{i_1,\cdots,i_{N_0}}
S_{\tau/N_0}^{i_{N_0}}\cdots S_{\tau/N_0}^{i_1}[V^0](x)+\frac{\epsilon}{2}
$$
Now by~\eqref{artdg}, we get
$$
S_\tau[V^0](x)\leq \sup_{i_1,\cdots,i_{N_0}}S_{\tau/N_0}^{i_{N_0}}\cdots S_ {\tau/N_0}^{i_1}[V^0](x)+\epsilon\leq \sup_{N}\sup_{i_1,\cdots,i_N} S_{\tau/N}^{i_{N}}\cdots S_ {\tau/N}^{i_1}[V^0](x)+\epsilon.
$$
This is true for any $\epsilon>0$, we conclude that:
$$
S_\tau[V^0](x)=\sup_{N}\sup_{i_1,\cdots,i_N} S_{\tau/N}^{i_{N}}\cdots S_{\tau/N}^{i_1}[V^0](x)
$$
for all $x\in \R^n$. Thus
$$
S_\tau[V^0]=\sup_{N}\sup_{i_1,\cdots,i_N} S_{\tau/N}^{i_{N}}\cdots S_{\tau/N}^{i_1}[V^0].
$$
\end{proof}

\subsection{Proof of Lemma~\ref{l-onestep}}\label{sec-pflone}
The proof of Lemma~\ref{l-onestep} shall need the following estimates:
\begin{lemma}\label{l-TaylorD}
 Let $\mathcal K\subset \sym_n$ be a compact convex subset. There exist $\tau_0>0$ and $L>0$ such that
$$
\|M_\tau^m[P]-P-\tau \Phi_m(P_0)\|\leq L \tau^ 2+L\tau\|P-P_0\|
$$
for all $P,P_0\in \K$, $\tau\in [0,\tau_0]$ and $m\in \M$.
\end{lemma}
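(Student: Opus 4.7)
The plan is to view this as a first-order Taylor expansion of the Riccati flow around $P_0$, evaluated at initial condition $P$. I would proceed in four steps.

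First, I would enlarge the compact convex set $\K$ to a slightly larger compact convex set $\tildeK$ (e.g., a closed ball of some radius containing $\K$ in its interior). Since $\Phi_m$ is a continuous (in fact polynomial) map and the index set $\M$ is finite, the constant $C := \sup_{m \in \M,\, Q \in \tildeK} \|\Phi_m(Q)\|$ is finite. By a standard ODE argument (the trajectory starting in $\K$ cannot escape $\tildeK$ until its distance from $\K$ exceeds the separation, which by the bound $\|\dot P\| \leq C$ requires time of order $\mathrm{dist}(\partial \tildeK, \K)/C$), there exists $\tau_0 > 0$ such that $M_s^m[P] \in \tildeK$ for all $P \in \K$, $m \in \M$ and $s \in [0,\tau_0]$. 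This guarantees maximal existence on $[0, \tau_0]$ and keeps all trajectories in a common compact set.

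Second, on $\tildeK$ the polynomial map $\Phi_m$ is Lipschitz: there is $L_0 > 0$ such that
$$\|\Phi_m(Q_1) - \Phi_m(Q_2)\| \leq L_0 \|Q_1 - Q_2\|, \qquad \forall Q_1, Q_2 \in \tildeK, \ m \in \M.$$
Combined with the uniform bound $\|\Phi_m(Q)\| \leq C$ on $\tildeK$, integrating the Riccati ODE gives
$$\|M_s^m[P] - P\| \leq \int_0^s \|\Phi_m(M_u^m[P])\| \, du \leq Cs, \qquad s \in [0,\tau_0].$$

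Third, I would write the integral form of the flow and subtract $\tau \Phi_m(P_0)$:
$$M_\tau^m[P] - P - \tau \Phi_m(P_0) = \int_0^\tau \bigl(\Phi_m(M_s^m[P]) - \Phi_m(P_0)\bigr)\, ds.$$
Applying the Lipschitz bound and the triangle inequality $\|M_s^m[P] - P_0\| \leq \|M_s^m[P] - P\| + \|P - P_0\| \leq Cs + \|P - P_0\|$, I get
$$\|M_\tau^m[P] - P - \tau \Phi_m(P_0)\| \leq L_0 \int_0^\tau \bigl(Cs + \|P - P_0\|\bigr)\, ds = \tfrac{L_0 C}{2}\tau^2 + L_0 \tau\|P - P_0\|,$$
which is the claimed estimate with $L := \max(L_0 C /2,\, L_0)$.

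I expect the one subtlety to be the first step: justifying that the trajectories of the \emph{indefinite} Riccati equation, which is only locally Lipschitz and could in principle blow up in finite time, stay inside a common compact neighborhood of $\K$ uniformly in $P \in \K$ and $m \in \M$ for all $\tau \in [0, \tau_0]$. Once this is secured, the rest is a routine Taylor-with-remainder argument for an ODE with a Lipschitz right-hand side.
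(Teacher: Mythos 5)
Your proof is correct, and it takes a genuinely different route from the paper. The paper obtains the $O(\tau^2)$ remainder via second-order information on the flow: it observes that $t\mapsto M_t^m[P]$ is twice differentiable with $\ddot M_t^m[P]=D\Phi_m(M_t^m[P])\circ\Phi_m(M_t^m[P])$, applies the mean-value (Taylor) estimate to bound $\|M_\tau^m[P]-P-\tau\Phi_m(P)\|$ by $\sup_t\|D\Phi_m\circ\Phi_m\|\,\tau^2$, and then trades $\Phi_m(P)$ for $\Phi_m(P_0)$ via the Lipschitz bound $\|D\Phi_m\|$ on $\K$. You instead work entirely at first order: you write the exact integral identity $M_\tau^m[P]-P-\tau\Phi_m(P_0)=\int_0^\tau(\Phi_m(M_s^m[P])-\Phi_m(P_0))\,ds$, feed in the a priori estimate $\|M_s^m[P]-P\|\le Cs$ and the Lipschitz constant of $\Phi_m$ on the enlarged compact, and integrate. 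This only uses that $\Phi_m$ is Lipschitz on compacts (one derivative), whereas the paper uses two derivatives of the flow, so your argument is both more elementary and slightly more robust. The uniform-existence step is handled differently but equivalently: the paper picks $\tau_0$ so that solutions exist on $[0,\tau_0]$ for all $P\in\K$, $m\in\M$, and defines $\tilde\K$ as the (compact) image of the flow on $[0,\tau_0]\times\K\times\M$; you fix a larger compact convex $\tilde\K$ in advance and use the escape-time bound $\mathrm{dist}(\partial\tilde\K,\K)/C$ to guarantee trajectories remain in it, which is a more explicit rendering of the same standard ODE fact. Both choices of $\tilde\K$ serve only to furnish uniform constants, and your version is entirely adequate.
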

\begin{proof}
Let $\tau_0>0$ such that for all $P\in \K$, $m\in \M$, the Riccati equation 
$$
\dot P= \Phi_m(P),\enspace P(0)=P,
$$
has a solution in $[0,\tau_0]$. Therefore, 
$$\tilde \K:=\{M_t^m[P]:t\in [0,\tau_0], P\in \K,m\in \M\}$$
is compact. Besides, for $P\in \K$ and $m\in \M$, the function 
$
M_{\cdot}^m[P]:[0,\tau_0]\rightarrow \sym_n
$ is twice differentiable in the variable $t$ and it satisfies:
$$
\dot M_t^m[P]=\Phi_m(M_t^m[P]), \quad \ddot M_t^m[P]=D\Phi_m(M_t^m[P])\circ\Phi_m(M_t^m[P]),\quad t\in[0,\tau_0].
$$
By the mean value theorem, for all $P,P_0\in \K$ and $\tau\in[0,\tau_0]$
$$
\|M_\tau^m[P]-P-\tau\Phi_m(P)\|\leq \sup_{t\in(0,\tau)}\|D\Phi_m(M_t^m[P])\circ \Phi_m(M_t^m[P])\| \tau^ 2
$$
and
$$
\|\Phi_m(P)-\Phi_m(P_0)\|\leq \sup_{Q\in \K} \|D\Phi_m(Q)\|\|P-P_0\|.
$$
Let 
$$
L=\max\{\sup_{m}\sup_{P\in \tilde \K} \|D\Phi_m(P)\circ \Phi_m(P)\|,\sup_m \sup_{P\in \K}\|D\Phi_m(P)\|\},
$$
then we have
$$
\|M_\tau^m[P]-P-\tau\Phi_m(P_0)\|\leq L \tau^ 2+L\tau\|P-P_0\|
$$
for all $P,P_0\in \K$, $\tau \in [0,\tau_0]$ and $m\in \M$.
\end{proof}

Using Lemma~\ref{l-TaylorD} we give a proof of Lemma~\ref{l-onestep}:

\begin{proof}[Proof of Lemma~\ref{l-onestep}]

 Let any $0<\delta<1$ and $\tilde \K\subset \sym_n$ be the compact convex defined by:
\begin{displaymath}
\tilde \K:=\displaystyle \overline\conv(\cup_{P_0\in \K} \overline {B(P_0,\delta)}).
\end{displaymath}
By Lemma~\ref{l-TaylorD}, there exists $\tau_1, L_1>0$ such that for all $m\in \M$, $P,P_0\in \tilde\K$ and $\tau \in [0,\tau_1]$
\begin{align}\label{adgfgfg}
M_\tau^m[P]\leq P+\tau \Phi_m(P_0)+(L_1\tau^2+L_1\tau\|P-P_0\|)I.
\end{align}
Let 
$$
\begin{array}{l}
L_2=\sup \{\|\Phi_m(P)\|: m\in \M,P\in \K\},\\ L_0=\max(L_1,L_1L_2), \\\tau_0=\min(\frac{\delta}{2L_2},\sqrt{\frac{\delta}{2eL_0}},\frac{1}{L_1},\tau_1).
\end{array}
$$
Let any $N\in \mathbb{N}$ ,$(i_1,\cdots,i_N)\in \M^N$, $\tau\in[0,\tau_0]$ and $V^0(x)=\frac{1}{2}x'P_0x$ with $P_0\in \K$. We are going to prove by induction on $k\in \{1,\cdots,N\}$ that:
\begin{align}\label{bgsf}
M^{i_k}_{\tau/N}\dots M^{i_1}_{\tau/N}[P_0]\leq P_0+\frac{\tau}{N}\Phi_{i_1}(P_0)+\cdots+\frac{\tau}{N}\Phi_{i_k}(P_0)+L_0(1+\frac{1}{N})^k \frac{\tau^2k^2}{N^2}I
\end{align}
When $k=1$, since $\frac{\tau}{N}\in[0,\tau_0]$ and $P_0\in \tilde\K$, by~\eqref{adgfgfg} we get:
$$
\begin{array}{ll}
M_{\tau/N}^{i_1}(P_0)&\leq P_0+\frac{\tau}{N}\Phi_{i_1}(P_0)+L_1(\frac{\tau}{N})^2 I\\
&\leq P_0+\frac{\tau}{N}\Phi_{i_1}(P_0)+L_0(1+\frac{1}{N})(\frac{\tau}{N})^2 I.
\end{array}
$$
Suppose that~\eqref{bgsf} is true for some $k\in \{1,\cdots,N-1\}$. That is:
\begin{align}\label{adfgeer}
M^{i_k}_{\tau/N}\cdots M^{i_1}_{\tau/N}[P_0]\leq P_0+\Delta_k
\end{align}
where $\Delta_k=\frac{\tau}{N}\Phi_{i_1}(P_0)+\cdots+\frac{\tau}{N}\Phi_{i_k}(P_0)+L_0(1+\frac{1}{N})^k \frac{\tau^2k^2}{N^2}I.$
Since 
$$
\begin{array}{ll}
\|\Delta_k\|&\leq \frac{k\tau}{N}L_2+L_0(1+\frac{1}{N})^k \frac{\tau^2k^2}{N^2}\\
&\leq \tau L_2+L_0e\tau^2\leq \delta,
\end{array}
$$
we have that $P_0+\Delta_k\in \tilde\K$ and by~\eqref{adgfgfg}:
$$
\begin{array}{ll}
M_{\tau/N}^{i_{k+1}}[P_0+\Delta_k]&\leq P_0+\Delta_k+\frac{\tau}{N}\Phi_{i_{k+1}}(P_0)+(L_1\frac{\tau^2}{N^2}+L_1\frac{\tau}{N} \|\Delta_k\|)I\\
&\leq P_0+\frac{\tau}{N}\Phi_{i_1}(P_0)+\cdots+\frac{\tau}{N}\Phi_{i_k}(P_0)+L_0(1+\frac{1}{N})^k \frac{\tau^2k^2}{N^2}I\\
&\quad+\frac{\tau}{N}\Phi_{i_{k+1}}(P_0)
+L_1\frac{\tau^2}{N^2}I+L_1\frac{\tau}{N}[\frac{k\tau}{N}L_2+L_0(1+\frac{1}{N})^k \frac{\tau^2k^2}{N^2}]I\\
&=P_0+\frac{\tau}{N}\Phi_{i_1}(P_0)+\cdots+\frac{\tau}{N}\Phi_{i_k}(P_0)+\frac{\tau}{N}\Phi_{i_{k+1}}(P_0)\\
&\quad \frac{\tau^2}{N^2}[L_0(1+\frac{1}{N})^kk^2+L_1+L_1L_2k+L_1L_0(1+\frac{1}{N})^k\frac{\tau k^2}{N}]I\\
&\leq P_0+\frac{\tau}{N}\Phi_{i_1}(P_0)+\cdots+\frac{\tau}{N}\Phi_{i_k}(P_0)+\frac{\tau}{N}\Phi_{i_{k+1}}(P_0)\\
&\quad +\frac{\tau^2}{N^2}[L_0(1+\frac{1}{N})^k(k^2+k+1)+L_0(1+\frac{1}{N})^k\frac{k^2}{N}]I\\
&\leq P_0+\frac{\tau}{N}\Phi_{i_1}(P_0)+\cdots+\frac{\tau}{N}\Phi_{i_k}(P_0)+\frac{\tau}{N}\Phi_{i_{k+1}}(P_0)\\&\quad+\frac{\tau^2(k+1)^2}{N^2}L_0(1+\frac{1}{N})^{k+1}I
\end{array}
$$
Thus, by~\eqref{adfgeer} and the monotonicity of the flow:
$$
\begin{array}{ll}
&M^{i_{k+1}}_{\tau/N}M^{i_k}_{\tau/N}\cdots M^{i_1}_{\tau/N}[P_0]\leq M_{\tau/N}^{i_{k+1}}[P_0+\Delta_k]\\
&\leq P_0+\frac{\tau}{N}\Phi_{i_1}(P_0)+\cdots+\frac{\tau}{N}\Phi_{i_k}(P_0)+\frac{\tau}{N}\Phi_{i_{k+1}}(P_0)+L_0(1+\frac{1}{N})^{k+1}\frac{\tau^2(k+1)^2}{N^2}I.
\end{array}
$$
We conclude that:
$$
\begin{array}{ll}
M^{i_{N}}_{\tau/N}\cdots M^{i_1}_{\tau/N}[P_0]&\leq P_0+\frac{\tau}{N}\Phi_{i_1}(P_0)+\cdots+\frac{\tau}{N}\Phi_{i_N}(P_0)+eL_0\tau^2 I
\end{array}
$$
Denote:
 $$
g(x)=\sup_{m\in \M}\frac{1}{2}( x'P_0x+x'\Phi_m(P_0)x).
$$
By Lemma~\ref{l-TaylorD} we have that
$$
P_0+\tau \Phi_m(P_0)\leq M_\tau^m[P_0]+L_1\tau^2 I, \quad \forall \tau \in[0,\tau_0], m\in \M.
$$
That is
$$
g(x)\leq S_\tau^m[V^0](x)+\frac{L_1}{2}\tau^2 |x|^2 , \quad \forall \tau \in[0,\tau_0], m\in \M.
$$
Therefore,
$$
\begin{array}{ll}
S^{i_{N}}_{\tau/N}\cdots S^{i_1}_{\tau/N}[V^0](x)
&=\frac{1}{2} x'M^{i_{N}}_{\tau/N}\cdots M^{i_1}_{\tau/N}[P_0]x \\ &\leq \frac{1}{2} (x'P_0x+\frac{\tau}{N}x'\Phi_{i_1}(P_0)x+\cdots+\frac{\tau}{N}x'\Phi_{i_N}(P_0)x+eL_0\tau^2 |x|^2)
\\
&\leq g(x)+ \frac{e L_0}{2}\tau^2|x|^2 \\
 &\leq \sup_m S_\tau^m[V^0](x)+L\tau^2 |x|^2,\enspace \forall x\in \R^n
\end{array}
$$
where $L=\frac{eL_0+L_1}{2}$ is clearly independent of $V^0$, $N$, $(i_1,\cdots,i_N)$ and $\tau\leq \tau_0$. We conclude that:
$$
\sup_N\sup_{i_1,\cdots,i_N}S^{i_{N}}_{\tau/N}\cdots S^{i_1}_{\tau/N}[V^0](x)\leq \sup_{m} S_\tau^m[V^0](x)+L\tau^2 |x|^2
$$
for all $\tau\in[0,\tau_0]$ and $V^0(x)=x'P_0x$ with $P_0\in \K$.
Finally we apply Lemma~\ref{lemma-St} to obtain the desired result.
\end{proof}

\section{Further discussions and a numerical illustration}\label{sec-dis}

\subsection{Linear quadratic Hamiltonians}
The contraction result being crucial to our analysis~(see Remark~\ref{rem-crucial}), it is impossible to extend the results to the general case with linear terms as in~\cite{MR2558316}. However, the one step error analysis~(Lemma~\ref{l-onestep}) is not restricted to the pure quadratic Hamiltonian. Interested reader can verify that the one step error $O(\tau^2)$ still holds in the case of~\cite{MR2558316}. Then by simply adding up the errors to time $T$, we get that:
$$\epsilon(x,\tau,N,V^0)\leq L(1+|x|^2)N\tau^2=L(1+|x|^ 2)T\tau.$$
Note that the term $|x|^2$ is replaced by $(1+|x|^2)$ for the general Hamiltonian with linear terms.
This estimate is of the same order as in~\cite{MR2558316} with much weaker assumption, especially the assumption on $\Sigma^m$.  

\subsection{A tighter bound on the complexity}

From Theorem~\ref{theo-error1} and~\ref{theo-error2}, we obtain a tighter bound on the complexity of the algorithm (compared to~\cite{MR2599910}):
\begin{coro}\label{coro-numberofiter}
Under Assumptions~\ref{assum1} and~\ref{assum2}, to get an approximation of $V$ of order $\epsilon$, the number of iterations is 
$$
O(\frac{-\log \epsilon}{\epsilon}),\enspace \mathrm{~as~~} \epsilon\rightarrow 0,
$$
whence the number of arithmetic operations is:
\begin{align}\label{a-boundop}
O(|\M|^{O(\frac{-\log \epsilon}{\epsilon})}n^3),\enspace \mathrm{~as~~} \epsilon\rightarrow 0.
\end{align}
\end{coro}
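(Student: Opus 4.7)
The plan is to combine Theorem~\ref{theo-error1} and Theorem~\ref{theo-error2} with the observation that the total approximation error at a point $x$ splits as
\[
V(x)-\{\pP_\tau\circ\tilde S_\tau\}^N[V^0](x) = \epsilon_0(x,T,V^0)+\epsilon^{\pP_\tau}(x,\tau,N,V^0),
\]
with $T=N\tau$, so the two terms can be balanced independently.

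First, using Theorem~\ref{theo-error1}, to force $\epsilon_0(x,T,V^0)\leq \frac{\epsilon}{2}|x|^2$ it suffices to take $Ke^{-\alpha T}\leq \epsilon/2$, that is $T\geq \alpha^{-1}\log(2K/\epsilon)$. Hence one may pick $T=\Theta(-\log\epsilon)$ as $\epsilon\to 0$. Next, apply Theorem~\ref{theo-error2} with a pruning procedure of order $r\geq 2$ (so $\min\{1,r-1\}=1$): the condition $L\tau|x|^2 \leq \frac{\epsilon}{2}|x|^2$ gives $\tau\leq \epsilon/(2L)$, so $\tau=\Theta(\epsilon)$ works. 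The number of iterations is then
\[
N=\frac{T}{\tau}=O\!\left(\frac{-\log\epsilon}{\epsilon}\right),\quad \epsilon\to 0,
\]
establishing the first assertion of the corollary.

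For the arithmetic complexity, I would argue that at each iteration of $\pP_\tau\circ\tilde S_\tau$, each stored quadratic form of the representation is propagated through the $|\M|$ single-mode semigroups $S_\tau^m$ (producing at most $|\M|$ new quadratic forms), and each such propagation reduces to integrating one Riccati equation~\eqref{a-Riccati}, whose standard cost is $O(n^3)$ arithmetic operations. Thus after $N$ iterations the representation has at most $|\M|^N$ quadratic forms, and the total work is $O(|\M|^N n^3)$. Substituting the estimate for $N$ yields \eqref{a-boundop}.

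The argument is essentially a bookkeeping combination of the two main theorems, so there is no genuine obstacle; the only point requiring a little care is choosing the pruning error exponent $r$. If $r\in(1,2)$ then $\min\{1,r-1\}=r-1<1$ and one instead obtains $\tau=\Theta(\epsilon^{1/(r-1)})$ and $N=O(-\log\epsilon\cdot\epsilon^{-1/(r-1)})$, which is worse but still leads to a bound of the form $|\M|^{O(-\log\epsilon/\epsilon^{1/(r-1)})}n^3$; picking $r\geq 2$ (as is allowed by Theorem~\ref{theo-error2}, which gains nothing beyond $r=2$) yields the cleanest statement~\eqref{a-boundop}.
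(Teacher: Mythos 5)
Your argument is correct and matches the paper's intended reasoning: the paper gives no explicit proof of Corollary~\ref{coro-numberofiter}, but the introduction makes clear it is the straightforward bookkeeping you carry out — take $T=O(-\log\epsilon)$ from Theorem~\ref{theo-error1}, $\tau=O(\epsilon)$ from Theorem~\ref{theo-error2} (with $r\geq 2$ or no pruning), so $N=T/\tau=O(-\log\epsilon/\epsilon)$, and then $O(|\M|^N n^3)$ arithmetic operations from the $|\M|^N$ worst-case count of propagated quadratics at $O(n^3)$ per Riccati solve. Your closing remark about the effect of $r\in(1,2)$ is a correct and welcome clarification of the implicit assumption on the pruning exponent.
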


\subsection{Convergence time}
Theorem~\ref{theo-error2} shows that for a sufficiently small $\tau$ and a pruning procedure of error $\tau^2$, the discrete-time approximation error $\epsilon^{\pP_\tau}(x,\tau, N, V^0)$ is $O(\tau)$ uniformly for all $N>0$. 
Meanwhile, by Theorem~\ref{theo-error1}, the finite horizon approximation error $\epsilon_0(x,T,V^0)$ decreases exponentially with the time horizon $T$.
Therefore, for a fixed sufficiently small $\tau$, the total error $\epsilon_0(x,T,V^0)+\epsilon^{\pP_\tau}(x,\tau, N, V^0)$ decreases at each propagation step and becomes stationary after a time horizon $T$ such that \begin{align}\label{a-epsilonT}\epsilon_0(x,T,V^0)\leq \epsilon^{\pP_\tau}(x,\tau, N, V^0).
\end{align}
 If the estimate $O(\tau)$ is tight, then~\eqref{a-epsilonT} implies that the stationary time $T$ is bounded by the relation:
$$
O(e^{-\alpha T})\leq O(\tau).
$$
Therefore the stationary time $T$ is bounded by $O(-\log(\tau))$,
which implies that numerically the total error stops decreasing after a number of iterations $N$ bounded by $O(-\log(\tau)/\tau)$.

To give an illustration, we implemented this max-plus approximation method, incorporating a pruning algorithm in~\cite{conf/cdc/GaubertMQ11} to a problem instance satisfying Assumption~\ref{assum1} and~\ref{assum2}
in dimension $n=2$ and with $|\M|=3$ switches. The pruning algorithm generates an error of order at most $\tau^2$ at each step.
We use the maximal absolute value of $H(x,\nabla V)$ \eqref{a-define-H} on the region $[-2,2]\times[-2,2]$ as the back-substitution error, denoted by $|H|_\infty$, to measure the approximation. We observe that for each $\tau$, the back-substitution error $|H|_\infty$ becomes stationary after a number of iterations, see Figure~\ref{fl1} for $\tau=0.0006$.
\begin{figure}[thpb]
      \centering
      \includegraphics[scale=0.23]{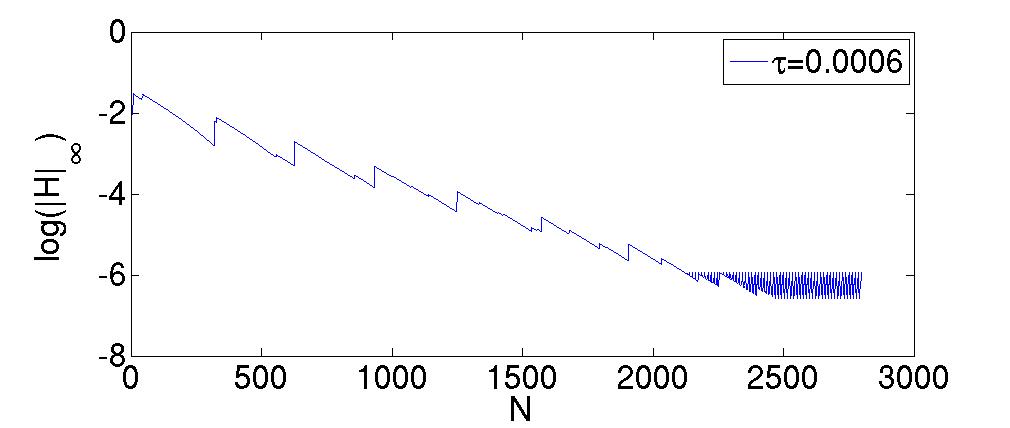}
      \caption{Plot of $\log |H|_\infty$ w.r.t. the iteration number $N$}
      \label{fl1}
   \end{figure}
We run the instance for different $\tau$ and for each $\tau$ we collect the time horizon $T$ when the 
back-substitution error becomes stationary. The plot shows a linear growth of $T$ with respect to $-\log(\tau)$,
which is an illustration of the exponential decreasing rate in Theorem~\ref{theo-error1}.

\begin{figure}[thpb]
      \centering
      \includegraphics[scale=0.23]{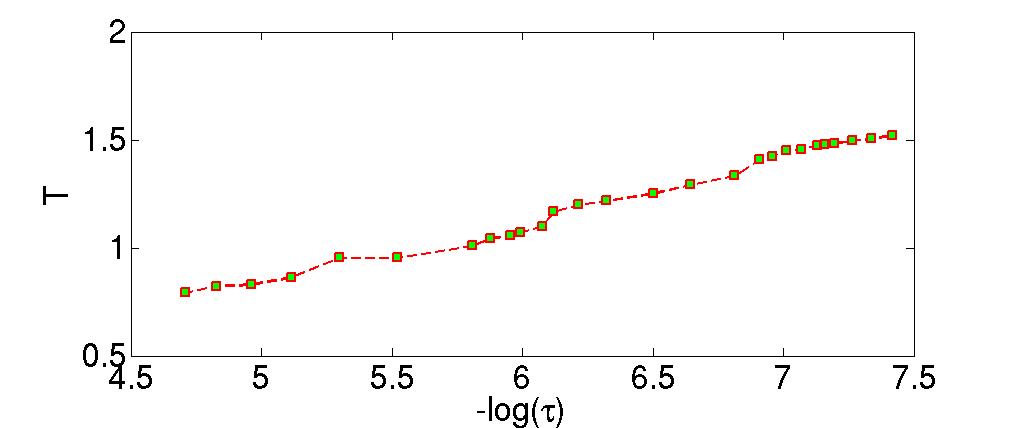}
      \caption{Plot of the convergence time $T$ w.r.t $-\log(\tau)$}
      \label{fl2}
   \end{figure}

\section*{Acknowledgments}
The author thanks Prof. S. Gaubert for his important suggestions and guidance on the present work.

\bibliographystyle{alpha}
\bibliography{biblio}

\end{document}